\let\@wraptoccontribs\wraptoccontribs
\UseAllTwocells \xyoption{frame} \CompileMatrices
\newtheorem{prop}{Proposition}[section]
\newtheorem{lem}[prop]{Lemma}
\newtheorem{cor}[prop]{Corollary}
\newtheorem{thm}[prop]{Theorem}
\newtheorem{defn}[prop]{Definition}
\newenvironment{pf}{\begin{trivlist}\item[]{\sc Proof.}}%
            {\nolinebreak $\Box$ \end{trivlist}}
\newcommand{\noprint}[1]{}
\newcommand{\XX}{{\mathfrak X}}
\renewcommand{\SS}{{\mathfrak S}}
\newcommand{\zz}{{\mathbb Z}}
\newcommand{\qq}{{\mathbb Q}}
\newcommand{\pp}{{\mathbb P}}
\newcommand{\cc}{{\mathbb C}}
\newcommand{\sT}{{\mathcal T}}
\newcommand{\sD}{{\mathcal D}}
\newcommand{\sE}{{\mathcal E}}
\newcommand{\sL}{{\mathcal L}}
\newcommand{\sS}{{\mathcal S}}
\newcommand{\sU}{{\mathcal U}}
\newcommand{\sO}{{\mathcal O}}
\newcommand{\sX}{{\mathcal X}}
\newcommand{\sY}{{\mathcal Y}}
\newcommand{\sM}{{\mathcal M}}
\newcommand{\hH}{{\mathcal H}}
\newcommand{\sZ}{{\mathcal Z}}
\newcommand{\sF}{{\mathcal F}}
\newcommand{\sA}{{\mathcal A}}
\newcommand{\Coh}{\mbox{Coh}}
\newcommand{\sB}{\mathscr{B}}
\DeclareMathOperator{\Hilb}{Hilb}
\DeclareMathOperator{\Pic}{Pic}
\DeclareMathOperator{\Ch}{Ch}
\DeclareMathOperator{\CR}{CR}
\DeclareMathOperator{\SU}{SU}
\DeclareMathOperator{\td}{td}
\DeclareMathOperator{\Stab}{Stab}
\DeclareMathOperator{\age}{age}
\DeclareMathOperator{\orb}{orb}
\DeclareMathOperator{\sst}{st}
\DeclareMathOperator{\Rea}{Re}
\DeclareMathOperator{\Ima}{Im}
\newcommand{\rk}{\mathop{\rm rk}}
\newcommand{\cl}{\mathop{\rm cl}\nolimits}
\newcommand{\tr}{\mathop{\rm tr}\nolimits}
\renewcommand{\Im}{\mathop{\rm Im}}
\newcommand{\rank}{\mathop{\rm rank}\nolimits}
\numberwithin{equation}{subsection}
\newcommand {\mat}      [1] {\left(\begin{array}{#1}}
\newcommand {\rix}          {\end{array}\right)}
\begin{document}
\title[Counting equivariant sheaves on K3 surfaces]{Counting equivariant  sheaves on K3 surfaces}

\author{Yunfeng Jiang}
\address{Department of Mathematics\\ University of Kansas\\ 405 Snow Hall 1460 Jayhawk Blvd\\Lawrence KS 66045 USA}
\email{y.jiang@ku.edu}


\author{Hao Max Sun}
\address{Department of Mathematics, Shanghai Normal University, Shanghai 200234, People's Republic of China}
\email{hsun@shnu.edu.cn, hsunmath@gmail.com}

\sloppy
\begin{abstract}
We study the equivariant sheaf counting theory on K3 surfaces with finite group actions.  Let $\sS=[S/G]$ be a global quotient stack, where $S$ is a K3 surface and $G$ is a finite group acting as symplectic homomorphisms  on $S$.    We show that the Joyce invariants counting Gieseker  semistable sheaves on $\sS$  are independent on the Bridgeland stability conditions.   As an application we prove the  multiple cover formula of Y. Toda  for the  counting invariants for semistable sheaves on local K3 surfaces  with a symplectic finite group action.
\end{abstract}

\maketitle

\tableofcontents

\section{Introduction}

Let $S$ be a smooth projective surface over $\cc$ and $X=S\times \cc$ the local K3 surface.  The sheaf counting theory of $S$ and $X$, such as Donaldson-Thomas invariants and Pandharipande-Thomas invariants, has
rich geometric structures.   In \cite{Toda_Adv},  \cite{Toda_JDG}, Y. Yoda studied the Joyce  counting invariants of semistable objects in the derived category defined by Bridgeland stability conditions.  Especially in \cite{Toda_Adv}, Toda proved that the Joyce invariants are independent to the stability conditions, and he used this result to study the Pandharipande-Thomas  stable pair invariants of local K3 surfaces in \cite{Toda_JDG}.
A multiple cover formula for the Joyce invariants for $X$ was conjectured in \cite{Toda_JDG}, and proved in \cite{MT} using both the Gromov-Witten theory of local K3 surfaces and the  Pandharipande-Thomas  stable pair theory.
The multiple cover formula is essential to the calculation of the generating series of the $\SU(r)$-Vafa-Witten invariants for K3 surfaces in \cite{TT2}.

The multiple cover formula of Toda was generalized and proved to be true for  \'etale K3 gerbes in \cite{JT}, which is essential to calculate the  $\SU(r)/\zz_r$-Vafa-Witten invariants for K3 surfaces in \cite{JT2}.
Both the generating series of  the $\SU(r)$-Vafa-Witten invariants and  $\SU(r)/\zz_r$-Vafa-Witten invariants for K3 surfaces are the two sides inspired by the S-duality conjecture in \cite{VW}.
The S-duality conjecture of Vafa-Witten  for K3 surfaces  was proved  in prime rank  by \cite{Jiang_2019}, \cite{JK}, and all ranks in \cite{JT2}.

In this paper we study the sheaf counting theory for local
orbifold K3 surfaces, and prove a multiple cover formula for the Joyce invariants counting semistable sheaves on local orbifold K3 surfaces.   The formula is essential to calculating the generating function of the $\SU(r)/\zz_r$-Vafa-Witten invariants for
orbifold K3 surfaces in \cite{JKT}.

\subsection{Sheaf counting on $[S/G]$}
Let $S$ be a smooth projective K3 surface and $G$ a finite group acting on $S$ as symplectic morphisms.  We consider the surface Deligne-Mumford stack $\sS=[S/G]$.  The action of $G$ on $S$ only has isolated fixed points, and the classification of the number of fixed points corresponding to different finite group actions are given in \cite[Chapter 15]{Huybrechts}.   Thus all the stacky locus of  $\sS=[S/G]$ are stacky points, and they are given by $ADE$ type singularities on the coarse moduli space $\overline{\sS}=S/G$.  Let $\sigma:  Y\to S/G$ be the minimal resolution.  It is a crepant resolution and the exceptional curves over a singular point in $S/G$ are given by $ADE$ type Dynkin diagrams.

Let $\Coh(\sS)$ be the abelian category of coherent sheaves on $\sS$, which is by  definition the category of $G$-equivariant sheaves on $S$.  We fix an ample divisor $\omega$ on $\overline{\sS}$.
Similar to the K3 surface, we define the orbifold Mukai vector $v_{\orb}(E)$ for any object $E\in \Coh(\sS)$ as:
$$v_{\orb}(E):=\widetilde{\Ch}(E)\cdot \sqrt{\widetilde{\td}_{\sS}}
=(\widetilde{\Ch}_0(E), \widetilde{\Ch}_1(E), \widetilde{\Ch}_0(E)+\widetilde{\Ch}_2(E))\in H^*_{\CR}(\sS)$$
where $\widetilde{\Ch}$ is the orbifold Chern character and $\widetilde{\td}$ is the orbifold Todd class.
Here $H^*_{\CR}(\sS)$ is the Chen-Ruan cohomology of $\sS$.  Let
$\Gamma_0^G$ be the group of all the orbifold Mukai vectors.

We take
$\sM(\Coh(\sS))$ to be the moduli stack of coherent sheaves on $\sS$, which is an algebraic stack locally of finite type over $\cc$.
We work on the moduli stack $\sM_{\omega}(v_{\orb})$ of $\omega$-Gieseker semistable sheaves
$E\in\Coh(\sS)$ with $v_{\orb}(E)=v_{\orb}\in \Gamma_0^G$.
In general, one takes a generating sheaf $\Xi$ on $\sS=[S/G]$ as in \cite{Nironi} and define the moduli stack of semistable sheaves on $\sS=[S/G]$
using the modified Gieseker stability defined by $\Xi$.  Since $\sS=[S/G]$ only has isolated stacky points, the  modified Gieseker stability  is equivalent to the $\omega$-Gieseker stability.

There is a Hall algebra structure on $\Coh(\sS)$ and we denote it by $\hH(\Coh(\sS))$, see \S \ref{subsec_Hall_algebra}.  Thus we have an element
$$\delta_{\omega,\sS}:=[\sM_{\omega}(v_{\orb})\hookrightarrow \sM(\Coh(\sS))]\in \hH(\Coh(\sS))$$
We take its logarithm as:
\begin{equation}\label{eqn_epsilon_sS_intro}
\epsilon_{\omega, \sS}(v_{\orb}):=
\sum_{\substack{\ell\ge 1, v_1+\cdots+v_{\ell}=v_{\orb}, v_i\in \Gamma_0^G \\
\overline{\chi}_{\omega, v_i}(m)=\overline{\chi}_{\omega,v_{\orb}}(m)}}
\frac{(-1)^{\ell-1}}{\ell}\delta_{\omega, \sS}(v_1)\star\cdots \star \delta_{\omega, \sS}(v_{\ell})
\end{equation}
where $\overline{\chi}_{\omega, v_{\orb}}(m)$ is the reduced Hilbert polynomial.

Let
$$C(\sS):=\Im(v_{\orb}: \Coh(\sS)\to \Gamma_0^G).$$
Then the Joyce invariants are defined as follows.
If  $v_{\orb}\in C(\sS)$, we define
\begin{equation}\label{Joyce_invariants_sS_intro}
J^{\omega}(v_{\orb})=\lim_{q^{\frac{1}{2}}\to 1}(q-1)P_q(\epsilon_{\omega,\sS}(v_{\orb}))
\end{equation}
where $P_q(-)$ is the Poincar\'e polynomial of the stack.  The Joyce invariants are independent to the polarization $\omega$.

Generalizing the construction in \cite{Bridgeland_K3}, there exist Bridgeland stability conditions $\sigma=(\sZ, \sA)$ on the bounded derived category $D^b(\Coh(\sS))$  of coherent sheaves on $\sS$.
Here Bridgeland stability condition $\sigma=(\sZ, \sA)$ is a pair, where $\sA$ is the heart of a bounded $t$-structure on $D^b(\Coh(\sS))$, and $\sZ: K(\sA)\to \cc$ is the central charge satisfying certain conditions, see \S \ref{subsec_Bridgeland_stability} for more details.
We define $\sM^{\sigma}(v_{\orb})$ to be the moduli stack of $\sigma$-semistable objects  $E\in \sA$ with $v_{\orb}(E)=v_{\orb}$.
The heart $\sA$ is an abelian category.
Let $\hH(\sA)$ be the Hall algebra of $\sA$.  Then similar to (\ref{eqn_epsilon_sS_intro}), we replace the moduli stack there by the moduli stack of $\sigma$-semistable objects in $\sA$ and define the Joyce invariants
$J^{\sigma}(v_{\orb})$ in the same way.
Our first main result is:

\begin{thm}\label{thm_Joyce_sigma_intro}(Theorem \ref{thm_Joyce_sigma})
The Joyce invariant $J^{\sigma}(v_{\orb})$ is independent to the stability conditions. Moreover
$$J^{\sigma}(v_{\orb})=J^{\omega}(v_{\orb}).$$
\end{thm}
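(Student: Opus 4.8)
The plan is to run the wall-crossing argument of Toda \cite{Toda_Adv} in the orbifold setting, where the crucial structural input is that $\sS=[S/G]$ is a \emph{symplectic} orbifold. Since $G$ preserves the holomorphic symplectic form on $S$, the canonical bundle $K_\sS$ is trivial, so Serre duality on the smooth proper $2$-dimensional Deligne--Mumford stack $\sS$ gives $\Ext^i_\sS(E,F)^\vee\cong\Ext^{2-i}_\sS(F,E)$; hence the Euler pairing $\chi(E,F)=\sum_i(-1)^i\dim\Ext^i_\sS(E,F)$ is symmetric, and it factors through a symmetric pairing on $\Gamma_0^G$ (the orbifold Mukai pairing). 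Every wall-crossing correction will vanish precisely because of this symmetry.

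First I would assemble the common input. From the Bridgeland stability conditions on $D^b(\Coh(\sS))$ constructed above, together with their support property, each $\sigma=(\sZ,\sA)$ produces moduli stacks $\sM^\sigma(v_{\orb})$ that are algebraic and of finite type over $\cc$, with finite Harder--Narasimhan filtrations, and the walls for a fixed class $v_{\orb}$ are locally finite in $\Stab(\sS)$. This puts $(\sA,\sigma)$ on the same footing as $(\Coh(\sS),\omega)$: both are noetherian abelian categories with well-behaved moduli of semistable objects, so $\epsilon^\sigma(v_{\orb})$ lies in the Hall Lie algebra $\hH(\sA)$ and there is an integration morphism from it to a Poisson algebra whose Lie bracket is controlled by the antisymmetrisation $\chi(v,w)-\chi(w,v)$ of the Euler pairing. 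By the symmetry noted above this Poisson bracket is identically zero, so the integration morphism annihilates every Lie bracket.

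The comparison then has two ingredients. First, Joyce's wall-crossing formula: for a path in $\Stab(\sS)$ crossing a wall, $\epsilon^{\sigma_+}(v_{\orb})-\epsilon^{\sigma_-}(v_{\orb})$ is a finite sum, over decompositions $v_{\orb}=v_1+\dots+v_\ell$ with the $v_i$ on a common ray, of iterated Lie brackets of the classes $\epsilon^{\sigma}(v_i)$ with universal combinatorial coefficients; since the integration morphism kills every Lie bracket, $J^{\sigma_+}(v_{\orb})=J^{\sigma_-}(v_{\orb})$, so $J^\sigma(v_{\orb})$ is constant on the connected component of $\Stab(\sS)$ constructed above. Second, that component has $\omega$-Gieseker stability in its large volume limit, as in \cite{Bridgeland_K3}: as the polarisation is scaled, $\delta^\sigma(v_{\orb})$ tends to $\delta_{\omega,\sS}(v_{\orb})$ and hence $\epsilon^\sigma(v_{\orb})$ to $\epsilon_{\omega,\sS}(v_{\orb})$, so the constant value equals $J^\omega(v_{\orb})$. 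This proves both assertions. When $v_{\orb}$ is primitive one can instead argue directly, the relevant moduli spaces being smooth projective holomorphic symplectic and the invariants deformation invariant.

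I expect the main obstacle to be not the vanishing argument but the foundational package underlying the second paragraph: one must verify that the existence and support property of Bridgeland stability conditions on $D^b(\Coh(\sS))$, boundedness and finite type of $\sM^\sigma(v_{\orb})$, local finiteness of walls, the existence of the integration morphism, and the no-pole property making $\lim_{q^{\frac{1}{2}}\to1}(q-1)P_q(\epsilon^\sigma(v_{\orb}))$ well defined all genuinely carry over to $\sS=[S/G]$. Because $\sS$ has only isolated stacky points, lying over $ADE$ singularities of $\overline{\sS}$ with a crepant resolution $Y\to\overline{\sS}$, one can try to deduce these either $G$-equivariantly on $S$ or through a derived equivalence involving $Y$; carrying this out carefully — in particular running the Hall-algebra and no-pole estimates in the presence of the nontrivial automorphism groups at the stacky points — is the substantive part of the work.
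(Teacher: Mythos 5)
Your proposal is correct and follows essentially the same route as the paper: wall-crossing invariance of $J^{\sigma}(v_{\orb})$ coming from the symmetry of the Euler pairing on the CY2 orbifold (which the paper simply imports as the orbifold analogue of \cite[Theorem 6.6]{Toda_Adv} rather than re-deriving the Hall-algebra/integration-map vanishing as you do), followed by identification with the Gieseker invariant in the large volume limit. The one point worth noting is that the step you pass over most quickly --- ``as the polarisation is scaled, $\delta^{\sigma}(v_{\orb})$ tends to $\delta_{\omega,\sS}(v_{\orb})$'' --- is where the paper's actual new work lies: it is established not as a limit but as an exact isomorphism $\sM^{v}(\sigma_{kD})\cong\sM^{v}(\omega)$ for all $k\geq N(E)$, via the two propositions comparing $G_{\omega,D}$-semistability with $\nu_{k,D}$-semistability (Theorem \ref{thm_G_Nu}), proved by explicit estimates on $k$ following \cite{Sun_H} and the orbifold Bogomolov inequality.
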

We prove Theorem \ref{thm_Joyce_sigma_intro} in the remaining subsections in \S \ref{sec_quotient_K3}.

\subsection{Multiple cover formula}

We generalize the above counting invariants to the local orbifold K3 surfaces. Let $\XX:=\sS\times \cc$ be the local orbifold K3 surface, which is a Calabi-Yau threefold Deligne-Mumford stack. Let
$\pi: \XX\to \cc$ be the projection and let
$\Coh_{\pi}(\XX)\subset \Coh(\XX)$ be the subcategory of coherent sheaves supported on the fibers of $\pi$.
Let $\sM(\Coh_{\pi}(\XX))$  be the moduli stack of objects in $\Coh_{\pi}(\XX)$, which is an algebraic stack locally of finite type.
Still fix the polarization $\omega$, and let $\sM_{\omega,\XX}(v_{\orb})$ be the moduli stack of $\omega$-Gieseker semistable sheaves
$E\in\Coh_{\pi}(\XX)$ with $v_{\orb}(E)=v_{\orb}\in \Gamma_0^G$.
Let  $\hH(\Coh_{\pi}(\XX))$ be the Hall algebra.  We have
$$\delta_{\omega,\XX}:=[\sM_{\omega,\XX}(v_{\orb})\hookrightarrow \sM(\Coh_{\pi}(\XX))]\in \hH(\Coh_{\pi}(\XX))$$
Its logarithm is:
\begin{equation}\label{eqn_epsilon_XX_intro}
\epsilon_{\omega, \XX}(v_{\orb}):=
\sum_{\substack{\ell\ge 1, v_1+\cdots+v_{\ell}=v_{\orb}, v_i\in \Gamma_0^G \\
\overline{\chi}_{\omega, v_i}(m)=\overline{\chi}_{\omega,v_{\orb}}(m)}}
\frac{(-1)^{\ell-1}}{\ell}\delta_{\omega, \XX}(v_1)\star\cdots \star \delta_{\omega, \XX}(v_{\ell})
\end{equation}
The Joyce invariants counting semistable coherent sheaves in $\Coh_{\pi}(\XX)$ are given by
$$J_{\XX}^{\omega}(v_{\orb})=\lim_{q^{\frac{1}{2}}\to 1}(q-1)P_q(\epsilon_{\omega,\XX}(v_{\orb}))\in\qq.$$
Our second result is the multiple cover formula of such invariants:

\begin{thm}\label{thm_multiple_cover_intro}(Theorem \ref{thm_multiple_cover})
We have  a multiple cover formula for $J_{\XX}(v_{\orb})$:
$$J_{\XX}(v_{\orb})=\sum_{k|v_{\orb}, k\ge 1}
\frac{1}{k^2}\chi(\Hilb^{n, \mathfrak{m}}(\sS))
$$
where  the data $n, \mathfrak{m}$ are determined by
$\frac{1}{k}v_{\orb}=(r, (\beta, \mathfrak{m}), n)$, and $\Hilb^{n, \mathfrak{m}}(\sS)$ is the Hilbert scheme of the orbifold K3 surface
$\sS$ with data $(n, \mathfrak{m})$.
\end{thm}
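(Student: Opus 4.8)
The plan is to reduce the multiple cover formula for $J_{\XX}(v_{\orb})$ to the abelian case of the K3 orbifold $\sS$ itself, and then to the known (non-orbifold or gerby) multiple cover formulas. The starting point is Theorem \ref{thm_Joyce_sigma_intro}: since the Joyce invariants for $\sS$ are independent of the Bridgeland stability condition, we may compute $J^\sigma(v_{\orb})$ for the most convenient $\sigma$. First I would set up the dimensional reduction. Because $\XX = \sS \times \cc$ and $\Coh_\pi(\XX)$ consists of sheaves set-theoretically supported on fibers, every such sheaf is an iterated extension of sheaves pulled back from $\sS$ (scheme-theoretically supported on a single fiber $\sS \times \{t\}$), equipped with the nilpotent endomorphism given by the coordinate on $\cc$. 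This gives the identification of $\Coh_\pi(\XX)$ with a category of pairs $(E,\phi)$ with $E \in \Coh(\sS)$ and $\phi \in \End(E)$ nilpotent (a ``$\cc[t]$-module'' structure), and on semistable objects with fixed Mukai vector the locus with $\phi$ generically zero already accounts for everything up to the extra $\mathbb{A}^1$-factor and its contribution to Poincaré polynomials. I would make this precise at the level of Hall-algebra elements: express $\delta_{\omega,\XX}(v_{\orb})$ in terms of $\delta_{\omega,\sS}$, take logarithms using \eqref{eqn_epsilon_XX_intro} versus \eqref{eqn_epsilon_sS_intro}, and extract the relation between $\epsilon_{\omega,\XX}$ and $\epsilon_{\omega,\sS}$. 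The upshot is a formula expressing $J_{\XX}^\omega(v_{\orb})$ as a sum over divisors $k \mid v_{\orb}$, with the $k$-th term built from the stack of $\omega$-semistable sheaves on $\sS$ of Mukai vector $\tfrac{1}{k}v_{\orb}$; this is exactly the orbifold analogue of Toda's reduction in \cite{Toda_JDG}.

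Next I would evaluate the contribution of each primitive class. Fix $k$ and set $w = \tfrac{1}{k} v_{\orb}$. Using Theorem \ref{thm_Joyce_sigma_intro} I may pass to a Bridgeland stability condition $\sigma$ on $D^b(\Coh(\sS))$ for which the $\sigma$-semistable objects of class $w$ have a particularly simple moduli stack — the natural choice is a ``generic'' $\sigma$ near the large-volume limit, or one for which $w$ behaves like the class of a point (or of a rank-one torsion-free sheaf after a derived equivalence). For such $\sigma$ and primitive $w$, the coarse moduli space of $\sigma$-stable objects is a holomorphic symplectic smooth DM stack deformation-equivalent to a Hilbert scheme of the orbifold K3; this is where $\Hilb^{n,\mathfrak m}(\sS)$ enters, with $(r,(\beta,\mathfrak m),n) = w$ recording rank, (twisted/orbifold) first Chern data $\mathfrak m$, and the holomorphic Euler characteristic datum $n$. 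The Joyce/Behrend-weighted count of such a stack reduces, by the $(-1)$-shifted symplectic / dimensional-reduction arguments standard in this subject, to the topological Euler characteristic $\chi(\Hilb^{n,\mathfrak m}(\sS))$; dividing by $k^2$ accounts for the automorphisms and the lattice-index rescaling in the logarithm exactly as in the non-orbifold case. Assembling the contributions over all $k \mid v_{\orb}$ gives the claimed formula.

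The technical heart of the argument — and the main obstacle — is the first step: making the dimensional reduction from $\XX$ to $\sS$ rigorous at the level of motivic/Poincaré-polynomial Hall-algebra identities in the equivariant (orbifold) setting, i.e. proving the precise analogue of Toda's comparison between $\epsilon_{\omega,\XX}$ and $\epsilon_{\omega,\sS}$ for the stack $[S/G]$. This requires that the Behrend-function / vanishing-cycle weights behave multiplicatively under the $\cc$-factor for $G$-equivariant sheaves, that the relevant moduli stacks of semistable objects on $\sS$ have the expected holomorphic-symplectic local structure (so their Joyce invariants are honest Euler characteristics), and that the orbifold Hilbert schemes $\Hilb^{n,\mathfrak m}(\sS)$ are smooth of the expected dimension with the stated Mukai vectors — the last point following from the Bridgeland-stability theory for $\sS$ developed earlier, together with the symplectic-resolution picture $\sigma : Y \to S/G$. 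Once these ingredients are in place, the rest is bookkeeping: match the divisor sum, the $1/k^2$ factors, and the data $(n,\mathfrak m)$, and conclude.
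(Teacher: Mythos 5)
There is a genuine gap at the heart of your argument, and it is worth being precise about where. Your plan is to (i) dimensionally reduce from $\Coh_{\pi}(\XX)$ to $\Coh(\sS)$ at the level of Hall-algebra logarithms, and then (ii) evaluate the $k$-th term of a divisor sum as $\tfrac{1}{k^2}\chi(\Hilb^{n,\mathfrak m}(\sS))$ by identifying moduli of semistable objects of class $\tfrac{1}{k}v_{\orb}$ with (deformations of) orbifold Hilbert schemes, asserting that the $1/k^2$ "accounts for the automorphisms and the lattice-index rescaling in the logarithm exactly as in the non-orbifold case." Step (ii) is not bookkeeping. The logarithm \eqref{eqn_epsilon_XX_intro} runs over \emph{all} decompositions $v_1+\cdots+v_\ell=v_{\orb}$ with equal reduced Hilbert polynomial, not just over $v_i=v_{\orb}/k$, and extracting the clean divisor sum with coefficient $1/k^2$ for imprimitive $v_{\orb}$ is precisely the content of Toda's multiple cover conjecture. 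Even in the non-orbifold case this was open until \cite{MT}, whose proof goes through the Gromov--Witten/stable-pairs correspondence for local K3 surfaces and the KKV formula --- inputs entirely absent from your outline. Identifying the moduli space for a \emph{primitive} class with a Hilbert scheme is the easy part; the $k\ge 2$ terms are the hard part, and your proposal gives no mechanism for them.

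The paper closes this gap by never attempting to prove the multiple cover structure on $\sS$ directly. Instead it (a) establishes stability-condition independence (Theorem \ref{thm_Joyce_sigma}), (b) transports the invariants through the Bridgeland--King--Reid derived equivalence $\Phi: D(\Coh(\sS))\to D(\Coh(Y))$ to the crepant resolution $Y$, which is an honest smooth K3 surface, proving $J_{\sS}(v_{\orb})=J_Y(\Phi_*v_{\orb})$ (Proposition \ref{prop_Jbar_sS_Y} and Corollary \ref{cor_J_sS_Y}) --- this is where the stability-condition independence is actually used, since $\Phi$ does not respect Gieseker stability; (c) quotes the Maulik--Thomas theorem for $Y$; and (d) translates the answer back into orbifold terms using the Bryan--Gyenge birational identification $\Hilb^{n,\mathfrak m}(\sS)\dashrightarrow\Hilb^{n+\frac12 D^2_{\mathfrak m}}(Y)$ (Lemmas \ref{lem_Hilbert_sS_Y}, \ref{lem_vorb_vY}) to match Euler characteristics. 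Your first step (the reduction from $\XX$ to $\sS\times\pp^1$ and the factor of $2$, i.e.\ Lemma \ref{lem_overlineJ_J}) is consistent with what the paper does, but to repair the proposal you would either need to route through $Y$ and cite \cite{MT} as the paper does, or redo the entire Maulik--Thomas argument in the orbifold setting, which is a far larger undertaking than the "bookkeeping" you describe.
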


We prove Theorem \ref{thm_multiple_cover_intro} by working on the sheaf counting invariants on the compactification
$\overline{\XX}=\sS\times\pp^1$ and $\overline{Z}=Y\times\pp^1$.
For the crepant resolution $Y\to S/G$.  From \cite{BKR}
There exists a  derived equivalence
$$\Phi: D(\Coh(\sS))\stackrel{\sim}{\longrightarrow} D(\Coh(Y)).$$
The equivalence induces the following  equivalence:
$$\Phi: D(\Coh_{\pi}(\overline{\XX}))\stackrel{\sim}{\longrightarrow} D(\Coh_{\pi}(\overline{Z}))$$
where $\Coh_{\pi}(\overline{\XX})\subset \Coh(\overline{\XX})$ (rep. $\Coh_{\pi}(\overline{Z})\subset \Coh(\overline{Z})$) is the subcategory of
coherent sheaves on $\overline{\XX}$ (rep. $\overline{Z}$) supported on the fibers under $\pi: \overline{\XX}\to \pp^1$ (rep. $\pi: \overline{Z}\to \pp^1$).
Then we define the Joyce invariants  $\overline{J}_{\overline{\XX}}(v_{\orb})$ (rep.  $\overline{J}_{\overline{Z}}(v_{Y})$) on the categories above.  The invariants satisfy the relation
 $\overline{J}_{\overline{\XX}}(v_{\orb})=2 J_{\XX}(v_{\orb})$.
We prove the automorphic property for the invariants $J_{\XX}(v_{\orb})$, and show that it is the same as $J_{Z}(v_Y)$ under the
isomorphism $\Phi_*: H^*_{\CR}(\sS)\to H^*(Y)$ such that $\Phi_*(v_{\orb})=v_Y$.   The multiple cover formula for the invariants
$J_{Z}(v_Y)$ in \cite{Toda_JDG}, \cite{MT}, plus the properties of the Hilbert scheme $\Hilb(\sS)$ of zero dimensional substacks  in $\sS$ (see \cite{BG}) implies the formula in Theorem \ref{thm_multiple_cover_intro}.

\subsection{Outline}
Here is the short outline of the paper.   We study the sheaf counting invariants on the orbifold K3 surface $[S/G]$ in \S \ref{sec_quotient_K3}, where we prove Theorem \ref{thm_Joyce_sigma_intro}.
The Joyce invariants counting semistable sheaves on the local orbifold K3 surfaces are defined in \S \ref{sec_local_orbifold_K3}, and we prove Theorem
\ref{thm_multiple_cover_intro}.

\subsection{Convention}
We work over complex number  $\cc$   throughout of the paper.    We use Roman letter $E$ to represent a coherent sheaf on a projective DM stack $\sS$, and use curly letter $\sE$ to represent the sheaves on the total space
Tot$(\sL)$ of a line bundle $\sL$ over $\SS$.
We reserve {\em $\rk$} for the rank of the torsion free coherent sheaves $E$.

\subsection*{Acknowledgments}

Y. J. would like to thank Amin Gholampour, Martijn Kool, and Richard
Thomas for valuable discussions on the Vafa-Witten invariants. Y. J.
is partially supported by Simons foundation collaboration grant. H.
M. S. is partially supported by National Natural Science Foundation
of China (Grant No. 11771294, 11301201).


\section{The Joyce invariants for  quotient K3 surfaces}\label{sec_quotient_K3}

\subsection{Symplectic action on  K3 surfaces}\label{subsec_quotient_K3}

Let $S$ be a smooth projective K3 surface, and $G$ a finite group which acts on $S$ as symplectic morphisms.
We consider the surface Deligne-Mumford stack $\sS:=[S/G]$. From \cite[Chapter 15]{Huybrechts}, the action only has isolated fixed points which are all ADE type singularities on the coarse moduli space
$\overline{\sS}=S/G$.  For instance, when $G=\mu_2$, there are totally $8$ isolated $A_1$-type singularities, and this is called the Nikulin involution.
More basic properties of the symplectic action on the K3 surface $S$ can be found in  \cite[Chapter 15]{Huybrechts}.

\subsection{Mukai lattice}

For the K3 surface $S$ with $H^1(S,\sO_S)=0$, the Mukai lattice is define by:
$$\widetilde{H}(S,\zz):=H^0(S,\zz)\oplus H^2(S,\zz)\oplus H^4(S,\zz)$$
and for $v_i=(r_i, \beta_i, n_i)\in \widetilde{H}(S,\zz)$ ($i=1,2$), the Mukai paring is defined as:
\begin{equation}\label{eqn_Mukai_paring}
\langle v_1, v_2\rangle=\beta_1\beta_2-r_1n_2-r_2n_1.
\end{equation}
There is a weight two Hodge structure on $\widetilde{H}(S,\zz)\otimes\cc$, which is given by:
$$
\begin{cases}
\widetilde{H}^{2,0}(S):=H^{2,0}(S);\\
\widetilde{H}^{1,1}(S):=H^{0,0}(S)\oplus H^{1,1}(S)\oplus H^{2,2}(S);\\
\widetilde{H}^{0,2}(S):=H^{0,2}(S).
\end{cases}
$$
Define:
$$\Gamma_0:=\widetilde{H}(S,\zz)\cap \widetilde{H}^{1,1}(S)=\zz\oplus NS(S)\oplus \zz.$$
The Mukai vector $v(E)\in \Gamma_0$ for any $E\in D^b(\Coh(S))$ is defined by
$$v(E):=\Ch(E)\sqrt{\td_S}=(\Ch_0(E), \Ch_1(E), \Ch_0(E)+\Ch_2(E)).$$
Riemann-Roch theorem tells us that
$$\chi(E,F)=-\langle v(E), v(F)\rangle.$$

\subsection{Orbifold Mukai vector}

Let $\sS=[S/G]$ be the surface Deligne-Mumford stack given by the symplectic action of $G$ on $S$.  We consider the Chen-Ruan cohomology
$$H^{*}_{\CR}(\sS,\qq)=\bigoplus_{i\in I_{\sS}}H^{*-2\age(\sS_i)}(\sS_i,\qq)$$
where the inertia stack $I\sS=\bigsqcup_{i\in I_{\sS}}\sS_i$ is the decomposition of the inertia stack of $\sS$ and
$I_{\sS}$ is the finite  index set.  We let $\sS_0:=\sS$ corresponding to the non-twisted sector.  All of the other components $\sS_i$ for $i\neq 0$ are isolated stacky point
$BG_{\sS_i}$ where $G_{\sS_i}\subset \SU(2)$ is a finite subgroup of $\SU(2)$ which corresponds to the centralizer of the conjugacy class $(g)$ in the local isotropy group of the stacky point.
The component $\sS_i$ always have age $1$.
We let $I_1\sS\subset I\sS$ be the components in the inertia stack containing all twisted sectors (which are all stacky ADE type points).

We define a similar weight two Hodge structure on
$$\widetilde{H}_{\CR}(\sS)=H^0_{\CR}(\sS)\oplus H^2_{\CR}(\sS)\oplus H^4_{\CR}(\sS)$$
which is given by:
$$
\begin{cases}
\widetilde{H}^{2,0}_{\CR}(\sS):=H^{2,0}(\sS);\\
\widetilde{H}_{\CR}^{1,1}(\sS):=H^{0,0}(\sS)\oplus H^{1,1}(\sS)\oplus H^0(I_1\sS)\oplus H^{2,2}(\sS);\\
\widetilde{H}_{\CR}^{0,2}(\sS):=H^{0,2}(\sS).
\end{cases}
$$
We define
$$\Gamma_0^{G}:=\widetilde{H}_{\CR}(\sS)\cap \widetilde{H}_{\CR}^{1,1}(\sS)
=\qq\oplus H^{1,1}(\sS)\oplus \qq^{|I_1\sS|}\oplus \qq
=\qq\oplus NS(\sS)\oplus \qq^{|I_1\sS|}\oplus \qq$$
where $|I_1\sS|$ is the number of components in $I_1\sS$.  We define the orbifold Mukai vector $v_{\orb}(E)$ for any
$E\in D^b(\Coh(\sS))$ as
$$v_{\orb}(E):=\widetilde{\Ch}(E)\cdot \sqrt{\widetilde{\td}_{\sS}}
=(\widetilde{\Ch}_0(E), \widetilde{\Ch}_1(E), \widetilde{\Ch}_0(E)+\widetilde{\Ch}_2(E))$$
where $\widetilde{\Ch}: K(\Coh(\sS))\to H^*_{\CR}(\sS)$ is the orbifold Chern character.
The orbifold Riemann-Roch theorem (for instance \cite{CIJS}) says that
$$\chi(E,F)=-\langle v_{\orb}(E), v_{\orb}(F)\rangle$$
for $E, F\in D^b(\Coh(\sS))$.

\subsection{Hall algebra}\label{subsec_Hall_algebra}

We talk about the counting sheaf  invariants  on the derived category $D^b(\Coh(\sS))$ of coherent sheaves on $\sS$.  It is well known that
$D^b(\Coh(\sS))$ is the same as the derived category of $G$-equivariant sheaves on the K3 surface $S$. Let us denote by
$\sM(\Coh(\sS))$ the moduli stack of coherent sheaves on $\sS=[S/G]$.  The stack $\sM(\Coh(\sS))$ is an algebraic stack locally of finite type over $\cc$.
We fix a ample divisor $\omega$ on $\overline{\sS}=S/G$.
Let $v_{\orb}\in \Gamma_0^G$ and
$$\sM_{\omega, \sS}(v_{\orb})\subset \sM(\Coh(\sS))$$
the substack of $\omega$-Gieseker semistable sheaves $E\in \Coh(\sS)$ satisfying $v_{\orb}(E)=v_{\orb}$.

Let $\hH(\Coh(\sS))$ be the $\qq$-vector space spanned by the isomorphism classes of symbols:
$$[\sX\stackrel{f}{\rightarrow}\sM(\Coh(\sS))]$$
where $\sX$ is an algebraic stack of finite type over $\cc$ with affine stabilizers and $f$ is a morphism of stacks.  The relations are given by:
\begin{equation}
[\sX\stackrel{f}{\rightarrow}\sM(\Coh(\sS))]-[\sY\stackrel{f|_{\sY}}{\rightarrow}\sM(\Coh(\sS))]-[\sU\stackrel{f|_{\sU}}{\rightarrow}\sM(\Coh(\sS))]
\end{equation}
where $\sY\subset \sX$ is a closed substack and $\sU=\sX\setminus \sY$.
There is a Hall algebra structure on  $\hH(\Coh(\sS))$ given by the Hall algebra product $\star$
\begin{equation}
[\sX\stackrel{f}{\rightarrow}\sM(\Coh(\sS))]\star [\sY\stackrel{g}{\rightarrow}\sM(\Coh(\sS))]=
[\sZ\stackrel{p_2\circ h}{\rightarrow}\sM(\Coh(\sS))]
\end{equation}
where $\sZ$ and $h$ fit into the Cartisian diagram:
$$
\xymatrix{
\sZ\ar[r]^{h}\ar[d]& \sE xt(\Coh(\sS))\ar[r]^{p_2}\ar[d]^{(p_1,p_2)}& \sM(\Coh(\sS))\\
\sX\times\sY\ar[r]^-{f\times g}& \sM(\Coh(\sS))\times \sM(\Coh(\sS))
}
$$
where $\sE xt(\Coh(\sS))$ is the stack of short exact sequences in $\Coh(\sS)$ and
$$p_i: \sE xt(\Coh(\sS))\to \sM(\Coh(\sS)) (i=1,2,3)$$
are morphisms of stacks sending a short exact sequence
$$0\to E_1\rightarrow E_2\rightarrow E_3\to 0$$
to the objects $E_i$ respectively.

\subsection{Joyce invariants}\label{subsec_Joyce_invariant}

Joyce \cite{Joyce_2007} defined a morphism
$$P_q: \hH(\Coh(\sS))\to \qq(q^{\frac{1}{2}})$$
such that if $H$ is a special algebraic group acting on a scheme $Y$, we have  from \cite[Definition 2.1]{Joyce_2007}
$$P_q\left([Y/H]\stackrel{f}{\rightarrow} \sM(\Coh(\sS))\right)=P_q(Y)/P_q(H)$$
where $P_q(Y)$ is the virtual Poincar\'e polynomial of $Y$.

We define an element
$$\delta_{\omega, \sS}(v_{\orb}):=[\sM_{\omega, \sS}(v_{\orb})\hookrightarrow \sM(\Coh(\sS))]
\in \hH(\Coh(\sS))$$
and its logarithm as:
\begin{equation}\label{eqn_epsilon_sS}
\epsilon_{\omega, \sS}(v_{\orb}):=
\sum_{\substack{\ell\ge 1, v_1+\cdots+v_{\ell}=v_{\orb}, v_i\in \Gamma_0^G \\
\overline{\chi}_{\omega, v_i}(m)=\overline{\chi}_{\omega,v_{\orb}}(m)}}
\frac{(-1)^{\ell-1}}{\ell}\delta_{\omega, \sS}(v_1)\star\cdots \star \delta_{\omega, \sS}(v_{\ell})
\end{equation}
where $\overline{\chi}_{\omega, v_{\orb}}(m)$ is the reduced Hilbert polynomial, i.e.,
$$\overline{\chi}_{\omega, v_{\orb}}(m)=\frac{\chi_{\omega, v_{\orb}}(m)}{a_d} (a_d \text{~is the first coefficient~})$$

\begin{defn}\label{defn_Joyce_invaraints_sS}
Let
$$C(\sS):=\Im(v_{\orb}: \Coh(\sS)\to \Gamma_0^G).$$
Then if  $v_{\orb}\in C(\sS)$, we define $J^{\omega}(v_{\orb})\in \qq$ as:
$$J^{\omega}(v_{\orb})=\lim_{q^{\frac{1}{2}}\to 1}(q-1)P_q(\epsilon_{\omega,\sS}(v_{\orb}));$$

If  $-v_{\orb}\in C(\sS)$, we define $J^{\omega}(v_{\orb})=J^{\omega}(-v_{\orb})$  and
if $\pm v_{\orb}\notin C(\sS)$, then $J^{\omega}(v_{\orb})=0$.
\end{defn}

\cite[Theorem 6.2]{Joyce_2007} guarantees that the limit above exists.

\subsection{Bridgeland stability conditions}\label{subsec_Bridgeland_stability}

We will construct the Bridgeland stability conditions on
$D^b(\Coh(\sS))$.

\begin{defn}
A Bridgeland stability condition on $D^b(\Coh(\sS))$ is a pair
$\sigma=(Z, \sA)$ consisting of the heart of a bounded $t$-structure
$\sA\subset D^b(\Coh(\sS))$ and a group homomorphism map (called
central charge) $Z: K(\sA)\to \cc$ such that the following are
satisfied:

\begin{enumerate}
\item $Z$ satisfies the following positivity property for any $0\neq E\in
\sA$:
$$Z(E)\in\{re^{i\pi\phi}: r>0, 0<\phi\leq1\}.$$

\item Every
object of $\sA$ has a Harder-Narasimhan filtration in $\sA$ with
respect to $\nu_{\sigma}$-stability, here the slope $\nu_{\sigma}$
of an object $E\in \mathcal{A}$ is defined by
\begin{eqnarray*}
\nu_{\sigma}(E)= \left\{
\begin{array}{lcl}
+\infty,  & &\mbox{if}~\Ima Z(E)=0,\\
&&\\
-\frac{\Rea Z(E)}{\Ima Z(E)}, & &\mbox{otherwise}.
\end{array}\right.
\end{eqnarray*}

\item $\sigma$ satisfies the support property: $Z$ factors as
$K(\sA)\xrightarrow{v}\Lambda\xrightarrow{g}\cc$ where $\Lambda$ is
a finitely generated free abelian group, and there exists a
quadratic form $Q$ on $\Lambda_{\mathbb{R}}$ such that
$Q|_{\ker(g)}$ is negative definite and $Q(v(E))\geq0$ for any
$\nu_{\sigma}$-semistable object $E\in\sA$.
\end{enumerate}
\end{defn}

We say $E\in\mathcal{A}$ is $\nu_{\sigma}$-(semi)stable if for any
non-zero subobject $F\subset E$ in $\mathcal{A}$, we have
$$\nu_{\sigma}(F)<(\leq)\nu_{\sigma}(E/F).$$
The Harder-Narasimhan filtration of an object $E\in \mathcal{A}$ is
a chain of subobjects
$$0=E_0\subset E_1\subset\cdots\subset E_m=E$$ in $\mathcal{A}$ such
that $G_i:=E_i/E_{i-1}$ is $\nu_{\sigma}$-semistable and
$\nu_{\sigma}(G_1)>\cdots>\nu_{\sigma}(G_m)$. We set
$\nu_{\sigma}^+(E):=\nu_{\sigma}(G_1)$ and
$\nu_{\sigma}^-(E):=\nu_{\sigma}(G_m)$.

We now give a construction of Bridgeland stability conditions on
$\sS=[S/G]$. For a fixed $\mathbb{Q}$-divisor $D$ on $\mathcal{S}$,
we define the twisted Chern character $\Ch^{D}(E)=e^{-D}\Ch(E)$ for
any $E\in D^b(\mathcal{S})$. More explicitly, we have
\begin{eqnarray*}
\begin{array}{lcl}
\Ch^{D}_0=\Ch_0=\rank  && \Ch^{D}_2=\Ch_2-D\Ch_1+\frac{D^2}{2}\Ch_0\\
&&\\
\Ch^{D}_1=\Ch_1-D\Ch_0 &&
\Ch^{D}_3=\Ch_3-D\Ch_2+\frac{D}{2}\Ch_1-\frac{D^3}{6}\Ch_0.
\end{array}
\end{eqnarray*}

Let $\omega$ be an ample divisor on $\sS$. We define the twisted
slope $\mu_{\omega, D}$ of a coherent sheaf $E\in \Coh(\mathcal{S})$
by
\begin{eqnarray*}
\mu_{\omega, D}(E)= \left\{
\begin{array}{lcl}
+\infty,  & &\mbox{if}~\Ch^D_0(E)=0,\\
&&\\
\frac{\omega\Ch_1^{D}(E)}{\omega^2\Ch_0^{D}(E)}, &
&\mbox{otherwise}.
\end{array}\right.
\end{eqnarray*}
There exists Harder-Narasimhan filtration
$$0=E_0\subset E_1\subset\cdots\subset E_n=E$$
such that $E_i/E_{i-1}$ is $\mu_{\omega, D}$-semistable and
$\mu_{\omega, D}(E_1/E_0)>\cdots> \mu_{\omega, D}(E_n/E_{n-1})$. We
let $\mu_{\omega, D}^+(E):=\mu_{\omega, D}(E_1/E_0)$ and
$\mu_{\omega, D}^-(E):= \mu_{\omega, D}(E_n/E_{n-1})$. We define
\begin{align}\label{eqn_torsion_pair}
&\sF_{\omega, D}:=\{E\in \Coh(\sS)| \mu_{\omega, D}^+(E)\leq 0\} \\
&\sT_{\omega, D}:=\{E\in \Coh(\sS)| \mu_{\omega, D}^-(E)> 0\}
\nonumber
\end{align}
We let $\sA_{\omega, D}\subset D^b(\mathcal{S})$ be the
extension-closure $\langle \sF_{\omega, D}[1], \sT_{\omega,
D}\rangle $. By the general theory of torsion pairs and tilting
\cite{HRS}, $\sA_{\omega, D}$ is the heart of a bounded t-structure
on $D^b(\mathcal{S})$; in particular, it is an abelian category. Let
$k>0$ be a positive rational number, and consider the following
central charge
$$Z_{k,D}(E)=-\Ch^D_2(E)+\frac{k^2}{2}\omega^2 \Ch^D_0(E)+i \omega
\Ch^D_1(E),$$ where $E\in \sA_{\omega, D}$.

\begin{thm}\label{Bri}
For any $(k, D)\in \mathbb{Q}_{>0}\times
NS(\mathcal{S})_{\mathbb{Q}}$, $\sigma_{k,D}=(Z_{k, D}, \sA_{\omega,
D})$ is a Bridgeland stability condition on $\mathcal{S}$.
\end{thm}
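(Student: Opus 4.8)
The plan is to verify the three axioms of a Bridgeland stability condition for the pair $\sigma_{k,D}=(Z_{k,D},\sA_{\omega,D})$ by adapting the Bridgeland--Arcara--Bertram construction on K3 surfaces (as in \cite{Bridgeland_K3}) to the orbifold setting $\sS=[S/G]$. The key point that makes this adaptation essentially formal is that $\sS$ has only isolated stacky points of ADE type, so the Chen--Ruan cohomology ring $H^*_{\CR}(\sS)$ behaves, for the purposes of intersection theory of divisors and surfaces, just like the cohomology of a smooth projective surface: the ample divisor $\omega$, the $\qq$-divisor $D$, and the orbifold Chern characters pair via the Mukai form $\langle -,-\rangle$ exactly as in the smooth case, and the orbifold Riemann--Roch theorem recalled above gives $\chi(E,F)=-\langle v_{\orb}(E),v_{\orb}(F)\rangle$.

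**First I would** establish the positivity property (axiom 1). For $0\neq E\in\sA_{\omega,D}$, one uses the torsion pair structure: $E$ sits in a triangle with cohomology $H^{-1}(E)\in\sF_{\omega,D}$ and $H^0(E)\in\sT_{\omega,D}$. One computes $\Ima Z_{k,D}(E)=\omega\Ch_1^D(E)$ and checks it is $\geq 0$, using that $\mu_{\omega,D}^+\leq 0$ on $\sF$ forces $\omega\Ch_1^D(H^{-1}(E)[1])\geq 0$ and $\mu_{\omega,D}^-> 0$ on $\sT$ forces $\omega\Ch_1^D(H^0(E))\geq 0$ for the part with nonzero rank, while torsion sheaves contribute $\geq 0$ as well. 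When $\Ima Z_{k,D}(E)=0$ one must show $\Rea Z_{k,D}(E)=-\Ch_2^D(E)+\tfrac{k^2}{2}\omega^2\Ch_0^D(E)>0$; this is exactly where the Hodge index / Bogomolov inequality enters. The vanishing of the imaginary part forces $E$ to be (an extension built from) a $\mu_{\omega,D}$-semistable sheaf of slope $0$ placed in degree $1$, or a torsion sheaf supported in codimension $\geq 1$; in the first case the Bogomolov inequality for $\mu$-semistable sheaves on $\sS$ together with the Hodge index theorem for the orbifold surface gives $\Ch_2^D\leq \tfrac{(\Ch_1^D)^2}{2\Ch_0^D}\leq \tfrac{(\omega\Ch_1^D)^2}{2\omega^2\Ch_0^D}=0$, and since $k>0$ the real part is strictly positive. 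The codimension-two (zero-dimensional substack) case is immediate since then $\Ch_2^D<0$ up to a positive multiple. One has to be mildly careful here because $\Ch$ takes values in $\qq$-cohomology for orbifolds, but the relevant inequalities are preserved.

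**Next I would** handle the Harder--Narasimhan property (axiom 2). Since $\sA_{\omega,D}$ is noetherian — this follows from $\sT_{\omega,D}$ being noetherian (it is a quotient of $\Coh(\sS)$ by a torsion theory) and $\sF_{\omega,D}$ being closed under subobjects — one can invoke the standard criterion (e.g. \cite[Proposition 2.4]{Bridgeland_K3} or Joyce--Bridgeland general nonsense) that a noetherian heart with a discrete-type central charge admits HN filtrations, provided there are no infinite descending chains of subobjects with strictly increasing phase; discreteness of $\Ima Z_{k,D}$ (its values lie in a discrete subset of $\rr$, as $\omega\Ch_1^D(E)$ ranges over $\tfrac1N\zz$ for a fixed denominator $N$ depending on $D$ and the orbifold structure) rules this out. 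Finally, for the support property (axiom 3), I would take $\Lambda=\Gamma_0^G$ (or the rank-three lattice spanned by $(\Ch_0^D,\omega\Ch_1^D,\Ch_2^D)$), let $v$ be the orbifold Mukai-type vector, and produce the quadratic form $Q$ from the Mukai pairing: essentially $Q(E)=(\omega\Ch_1^D(E))^2-2\omega^2\,\Ch_0^D(E)\Ch_2^D(E)$ — a discriminant-type form which is the Bogomolov form pulled back along the obvious projection. One checks $Q$ is negative definite on $\ker(g)$ (the locus $\Ch_0^D=\Ima Z=0$, where $Q=-$ something negative) and $Q(v(E))\geq 0$ on $\nu_{\sigma_{k,D}}$-semistable objects; the latter reduces, via the description of $\nu_{\sigma_{k,D}}$-semistable objects in terms of $\mu_{\omega,D}$-semistable sheaves and a limiting argument, to the Bogomolov inequality on $\sS$.

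**The hard part will be** the support property, specifically verifying $Q(v(E))\geq 0$ for all $\nu_{\sigma_{k,D}}$-semistable objects rather than merely for semistable sheaves: one must control how the discriminant behaves under the tilt, which classically (Bridgeland, Bayer--Macr\`i) requires either a careful case analysis of the possible Harder--Narasimhan/Jordan--H\"older pieces or Bogomolov's inequality combined with the inequality $Q$ satisfied already at the level of $\mu$-semistable sheaves and a convexity argument. In the orbifold case the extra subtlety is that the ADE stacky points contribute classes in $H^0(I_1\sS)$ to the Mukai lattice which have no effect on the divisor-theoretic intersection numbers entering $Q$, so one should first check that $Q$ as defined factors through the projection $\Gamma_0^G\to \zz\oplus NS(\sS)\oplus\zz$ forgetting the twisted-sector components — after that reduction the argument is identical to the K3 surface case, and the Bogomolov inequality for semistable sheaves on the Deligne--Mumford stack $\sS$ (which holds because $\sS$ has a crepant resolution $Y\to S/G$ and torsion-free semistable sheaves on $\sS$ correspond to torsion-free sheaves on $S$, where the classical Bogomolov inequality applies $G$-equivariantly) closes the argument. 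I would also remark that once axioms 1--3 hold, the locally-finite / full conditions in Bridgeland's original definition follow automatically, as is standard.
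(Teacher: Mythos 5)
Your proposal is correct and follows essentially the same route as the paper: the only substantive input is the discriminant inequality $\overline{\Delta}^D_{\omega}(E)=(\omega\Ch^D_1(E))^2-2\omega^2\Ch^D_0(E)\Ch^D_2(E)\geq 0$ for $\mu_{\omega,D}$-semistable sheaves on $\sS$, obtained from the Hodge index theorem and Bogomolov's inequality, after which the paper simply cites \cite{Bridgeland_K3}, \cite{AB} and \cite[Appendix 2]{BMS} for the tilting machinery that you spell out by hand. One bookkeeping remark: when $\Ima Z_{k,D}(E)=0$ the positivity axiom requires $\Rea Z_{k,D}(E)<0$ (so that $Z(E)\in\rr_{<0}$, i.e.\ $\phi=1$); your inequality $-\Ch^D_2+\frac{k^2}{2}\omega^2\Ch^D_0>0$ is the correct statement for the underlying slope-zero semistable sheaf $H^{-1}(E)$, and it becomes the required negativity only after applying the shift $[1]$.
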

\begin{pf}
By the following Hodge index theorem and Bogomolov's inequality on
$\sS$, one sees that
$$\overline{\Delta}^D_{\omega}(E):=(\omega\Ch^D_1(E))^2-2\omega^2\Ch^D_0(E)\Ch^D_2(E)\geq0$$ for any $\mu_{\omega, D}$-semistable sheaf.
Therefore the required assertion is proved in \cite{Bridgeland_K3},
\cite{AB} and \cite[Appendix 2]{BMS}. See also \cite[Corollary
2.22]{PT}.
\end{pf}
\begin{thm}[Hodge index theorem]
Let $L$ be a divisor on $\mathcal{S}$, then we have
$$L^2\omega^2\leq (L\omega)^2.$$
\end{thm}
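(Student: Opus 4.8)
The plan is to reduce the stacky inequality to the classical Hodge index theorem on the K3 surface $S$ itself by pulling back along the quotient map. Let $\pi\colon S\to\mathcal{S}=[S/G]$ be the structure morphism and $q\colon\mathcal{S}\to\overline{\mathcal{S}}=S/G$ the coarse moduli map, so that $p:=q\circ\pi\colon S\to S/G$ is finite of degree $|G|$. Pullback along $\pi$ induces $\pi^*\colon\Pic(\mathcal{S})_{\mathbb{Q}}\to\Pic(S)_{\mathbb{Q}}$, and since $\pi$ is a finite étale $G$-torsor of degree $|G|$ the projection formula gives
\[
(A\cdot B)_{\mathcal{S}}=\tfrac{1}{|G|}\,(\pi^*A\cdot\pi^*B)_{S}
\]
for all $\mathbb{Q}$-divisor classes $A,B$ on $\mathcal{S}$. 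In particular $L^2\omega^2$ and $(L\omega)^2$ computed on $\mathcal{S}$ equal $\tfrac{1}{|G|^2}(\pi^*L)^2(\pi^*\omega)^2$ and $\tfrac{1}{|G|^2}(\pi^*L\cdot\pi^*\omega)^2$ respectively, so it suffices to prove $D^2H^2\le(DH)^2$ on $S$ for the pulled-back classes $D:=\pi^*L$ and $H:=\pi^*\omega$.

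First I would observe that $H=\pi^*\omega=p^*\omega$ is ample on $S$: $\omega$ is ample on the projective surface $\overline{\mathcal{S}}$ and $p$ is a finite surjective morphism, so the pullback is ample; in particular $H^2>0$. Then I invoke the classical Hodge index theorem for the smooth projective K3 surface $S$: the intersection form on $\NS(S)_{\mathbb{R}}$ has signature $(1,\rho-1)$, so the orthogonal complement $H^{\perp}$ of the ample class $H$ is negative definite. Writing $D=\tfrac{DH}{H^2}\,H+D'$ with $D'\in H^{\perp}$, we obtain
\[
D^2=\tfrac{(DH)^2}{H^2}+(D')^2\le\tfrac{(DH)^2}{H^2},
\]
hence $D^2H^2\le(DH)^2$. (Equivalently, the Gram matrix of the intersection form on $\spann_{\mathbb{R}}(H,D)$ cannot be positive definite by the signature constraint, while $H^2>0$, so its determinant is $\le 0$.) Multiplying through by $\tfrac{1}{|G|^2}$, i.e. applying the displayed scaling formula in reverse, yields $L^2\omega^2\le(L\omega)^2$ on $\mathcal{S}$.

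I do not expect a serious obstacle here: the only points requiring care are the bookkeeping of $\mathbb{Q}$-valued intersection numbers on the Deligne–Mumford stack $\mathcal{S}$ (handled by the étale-pullback projection formula) and the preservation of $\mathbb{Q}$-ampleness under the finite map $p$. Once these are in place the statement follows immediately from the surface case, and this is exactly the form in which it is used to deduce the Bogomolov-type inequality $\overline{\Delta}^D_\omega(E)\ge 0$ in the proof of Theorem \ref{Bri}.
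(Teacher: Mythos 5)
Your proof is correct. Note that the paper itself gives no argument for this statement at all: the Hodge index theorem is stated bare, with neither a proof environment nor a citation, and is immediately consumed in the proof of Theorem \ref{Bri} to get $\overline{\Delta}^D_{\omega}(E)\geq 0$ from Bogomolov's inequality. So there is nothing in the paper to compare against line by line; what you have done is supply the missing verification, and your route is the natural one. The two points you flag as needing care are exactly the right ones, and both check out: $S\to[S/G]$ is a $G$-torsor, hence representable finite \'etale of degree $|G|$ even though $G$ acts with fixed points on $S$ (the stacky structure at the fixed points is precisely what makes the atlas map \'etale), so the degree-$|G|$ projection formula for $\qq$-valued intersection numbers on the Deligne--Mumford stack is legitimate; and ampleness of $\omega$ does pull back under the finite surjection $p:S\to S/G$, giving $H^2>0$ on $S$ so that the signature-$(1,\rho-1)$ argument (or equivalently the nonpositivity of the Gram determinant of $\spann_{\rr}(H,D)$) applies. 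One could alternatively argue entirely on the coarse space: $\NS(\sS)_{\qq}\cong\NS(S/G)_{\qq}$ since the stacky locus of $\sS$ consists of isolated points, and the Hodge index theorem holds on the normal projective surface $S/G$; your reduction to the smooth K3 surface $S$ avoids having to invoke the index theorem for singular surfaces, which is a reasonable trade.
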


\begin{thm}[Bogomolov's inequality]\label{Bog}
Let $E$ be a torsion free $\mu_{\omega, D}$-semistable sheaf on
$\mathcal{S}$. Then we have
$$\Delta(E):=(\Ch^D_1(E))^2-2\Ch^D_0(E)\Ch^D_2(E)\geq0.$$
\end{thm}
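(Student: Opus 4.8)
The plan is to deduce this from the classical Bogomolov inequality on the K3 surface $S$ itself, via the finite \'etale presentation $\rho\colon S\to\sS=[S/G]$. First I would reduce to the untwisted case $D=0$. Expanding $\Ch_1^D=\Ch_1-D\Ch_0$ and $\Ch_2^D=\Ch_2-D\Ch_1+\tfrac{D^2}{2}\Ch_0$, a one-line computation shows that the twisted discriminant $(\Ch_1^D(E))^2-2\Ch_0^D(E)\Ch_2^D(E)$ equals the untwisted one $(\Ch_1(E))^2-2\Ch_0(E)\Ch_2(E)$; and from $\mu_{\omega,D}(F)=\mu_{\omega,0}(F)-\tfrac{\omega\cdot D}{\omega^2}$ the two slope functions differ by a constant independent of $F$, so $\mu_{\omega,D}$-semistability is literally the same condition as $\mu_{\omega,0}$-semistability. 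Hence it suffices to prove $\Delta(E)\ge 0$ for a torsion-free $\mu_{\omega}$-semistable sheaf $E$ on $\sS$.

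Next I would pass to the cover. Since $\sS=[S/G]$ with $G$ finite (acting symplectically, but that is irrelevant here), $\rho\colon S\to\sS$ is a $G$-torsor, hence a representable finite \'etale morphism of degree $|G|$, and a coherent sheaf $E$ on $\sS$ is exactly a $G$-equivariant coherent sheaf on $S$; moreover $E$ is torsion-free on $\sS$ iff its underlying sheaf $\rho^*E$ is torsion-free on $S$. Fix a $G$-invariant ample divisor $h$ on $S$ in the class $\rho^*\omega$. Because $\rho$ is \'etale one has $\Ch(\rho^*E)=\rho^*\Ch(E)$, $\rk(\rho^*E)=\rk(E)$, and $\int_S\rho^*\alpha=|G|\int_{\sS}\alpha$ on top-degree classes; this gives simultaneously $\mu_h(\rho^*E)=\mu_{\omega}(E)$ (slopes match exactly) and $\Delta(\rho^*E)=|G|\,\Delta(E)$. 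The key step is then to show $\rho^*E$ is $\mu_h$-semistable on $S$: if not, its maximal destabilizing subsheaf $F\subset\rho^*E$ satisfies $\mu_h(F)>\mu_h(\rho^*E)$, and by uniqueness of the Harder--Narasimhan filtration $F$ is preserved by the $G$-action, so it underlies a $G$-equivariant subsheaf $F'\subset E$ with $\mu_{\omega}(F')=\mu_h(F)>\mu_{\omega}(E)$, contradicting semistability of $E$ on $\sS$. Finally, the classical Bogomolov inequality for $\mu_h$-semistable torsion-free sheaves on the smooth projective surface $S$ (valid over $\cc$) gives $\Delta(\rho^*E)\ge 0$, and dividing by $|G|$ yields $\Delta(E)\ge 0$.

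I expect the main (though mild) obstacle to be the bookkeeping: one must check that the forgetful functor from $G$-equivariant sheaves to $\sO_S$-modules sends $\mu$-semistable objects to $\mu$-semistable objects (the uniqueness-of-HN argument above), that the rational intersection pairing on the orbifold surface $\sS$ is normalized so that $\rho^*$ scales top intersection numbers by $|G|$ — which is exactly the normalization implicit in the orbifold Riemann--Roch statement used earlier — and that $\Delta$ is genuinely twist-invariant. An alternative, more self-contained route would be to replay the classical proof directly on $\sS$, e.g. via a restriction theorem to a general complete-intersection curve in $|m\omega|$, or via the Hodge index theorem for $\sS$ stated above applied to $\cEnd(E)$; but the cover argument is shorter and only uses geometry of $S$ already at hand, so I would take that route.
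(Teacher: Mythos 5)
Your proposal is correct, but it takes a different route from the paper: the paper does not prove Theorem \ref{Bog} at all, it simply cites \cite{JK}, which establishes the Bogomolov--Gieseker inequality for general tame Deligne--Mumford surfaces (an argument that does not presuppose a smooth finite \'etale atlas and so has to work harder, e.g.\ with generating sheaves and modified invariants). Your argument instead exploits the special structure available here: since $\sS=[S/G]$ with $S$ a smooth projective K3 surface, the atlas $S\to\sS$ is a representable finite \'etale $G$-torsor (even though $G$ has fixed points on $S$, the map to the \emph{stack}, unlike the map to the coarse space $S/G$, is unramified), coherent sheaves on $\sS$ are $G$-equivariant sheaves on $S$, the discriminant is twist-invariant by direct expansion, slopes and $\Delta$ transform by the degree $|G|$ under pullback, and $\mu_h$-semistability of $\rho^*E$ follows from uniqueness of the maximal destabilizing subsheaf with respect to the $G$-invariant polarization $h=\rho^*\omega$. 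All of these steps are sound, so the classical Bogomolov inequality on $S$ does yield $\Delta(E)\ge 0$. What each approach buys: the citation to \cite{JK} gives the statement in the generality of arbitrary tame DM surfaces and keeps the paper short; your covering argument is self-contained, elementary, and arguably preferable in the present setting, but it would not generalize to orbifold surfaces that are not global quotients of smooth surfaces by finite groups. One small point worth making explicit if you write this up: the Chern characters $\Ch_i$ entering $\Delta$ here are the ordinary (untwisted-sector) ones, so $\Ch(\rho^*E)=\rho^*\Ch(E)$ and the scaling $\int_S\rho^*\alpha=|G|\int_{\sS}\alpha$ are exactly what you need; the orbifold Chern character $\widetilde{\Ch}$ valued in Chen--Ruan cohomology plays no role in this inequality.
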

\begin{proof}
See \cite{JK}.
\end{proof}

The stability condition $\sigma_{k,D}$ above is usually called
tilt-stability or $\nu_{k, D}$-stability, where the $\nu_{k,
D}$-slope of an object $E\in \sA_{\omega,D}$ is defined by
\begin{eqnarray*}
\nu_{k, D}(E)= \left\{
\begin{array}{lcl}
+\infty,  & &\mbox{if}~\omega\Ch_1^D(E)=0,\\
&&\\
\frac{\Ch_2^D(E)-\frac{k^2}{2}\omega^{2}\Ch_0^D(E)}{\omega\Ch_1^D(E)},
& &\mbox{otherwise}.
\end{array}\right.
\end{eqnarray*}
The $\nu_{k, D}$-semistable objects still satisfy Bogomolov's
inequality:

\begin{thm}\label{Bog2}
Let $E$ be a $\nu_{k, D}$-semistable object in $\sA_{\omega,D}$.
Then we have
$$\overline{\Delta}^D_{\omega}(E):=(\omega\Ch^D_1(E))^2-2\omega^2\Ch^D_0(E)\Ch^D_2(E)\geq0.$$
\end{thm}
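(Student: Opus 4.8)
The plan is to reduce the statement to the classical Bogomolov inequality on the surface $\sS$ (Theorem~\ref{Bog}), which is already available. The quickest route is to note that the proof of Theorem~\ref{Bri} proceeds via the standard construction of \cite{Bridgeland_K3}, \cite{AB}, \cite[Appendix~2]{BMS}, and that construction verifies the support property of $\sigma_{k,D}$ with the \emph{specific} quadratic form $Q=\overline{\Delta}^D_\omega$; once this is granted, the conclusion of the theorem is literally the support property restated for $\nu_{k,D}$-semistable objects. To make this self-contained I would reprove the inequality directly. Throughout I would write $\mathbf{c}(E):=(\Ch_0^D(E),\,\omega\Ch_1^D(E),\,\Ch_2^D(E))\in\qq^3$, so that $\overline{\Delta}^D_\omega$ becomes a quadratic form in $\mathbf{c}(E)$ and $Z_{k,D}$ factors through $\mathbf{c}$ as $\mathbf{c}\mapsto -\Ch_2^D+\tfrac{k^2}{2}\omega^2\Ch_0^D+i\,\omega\Ch_1^D$.

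First I would reduce to the case that $E$ is $\nu_{k,D}$-\emph{stable}. The key observation is that $\overline{\Delta}^D_\omega$ is negative definite on $\ker Z_{k,D}\subset\qq^3$: on that line one has $\omega\Ch_1^D=0$ and $\Ch_2^D=\tfrac{k^2}{2}\omega^2\Ch_0^D$, hence $\overline{\Delta}^D_\omega=-k^2\omega^4(\Ch_0^D)^2<0$ unless $\mathbf{c}=0$ (this uses $k>0$). If $E$ is semistable but not stable I would take a Jordan--H\"older filtration with $\nu_{k,D}$-stable factors of slope $\nu_{k,D}(E)$ and induct on its length; the inductive step is a short exact sequence $0\to A\to E\to B\to 0$ in $\sA_{\omega,D}$ with $A$ stable, $B$ semistable, all three of the same slope, and $\overline{\Delta}^D_\omega(A),\overline{\Delta}^D_\omega(B)\ge0$ by induction. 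If $\mathbf{c}(A)$ and $\mathbf{c}(B)$ are proportional then $\overline{\Delta}^D_\omega(E)=(1+\lambda)^2\overline{\Delta}^D_\omega(A)\ge0$ directly; otherwise I would look at the one-variable quadratic $f(s):=\overline{\Delta}^D_\omega\big(\mathbf{c}(A)+s\,\mathbf{c}(B)\big)$, whose value at $s=0$ is $\ge0$, whose leading coefficient $\overline{\Delta}^D_\omega(B)$ is $\ge0$, and which takes a negative value at the (necessarily negative) $s_0$ for which $\mathbf{c}(A)+s_0\,\mathbf{c}(B)\in\ker Z_{k,D}$ --- such an $s_0$ exists precisely because $A$ and $B$ have equal slope. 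An upward or degenerate parabola that is nonnegative at $0$ but negative somewhere to the left of $0$ is nonnegative on $[0,\infty)$, so evaluating at $s=1$ gives $\overline{\Delta}^D_\omega(E)\ge0$. The same device will be reused below.

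For the stable case I would pass to cohomology sheaves for the standard $t$-structure: there is a short exact sequence $0\to F[1]\to E\to T\to0$ in $\sA_{\omega,D}$ with $F=H^{-1}(E)\in\sF_{\omega,D}$ torsion-free of $\mu_{\omega,D}^+(F)\le0$, and $T=H^0(E)\in\sT_{\omega,D}$ whose torsion is supported in dimension $\le1$ and whose torsion-free part has $\mu_{\omega,D}^->0$. Since the Hodge index theorem gives $\overline{\Delta}^D_\omega(G)\ge\omega^2\,\Delta(G)$ for every $G$ (with $\Delta$ as in Theorem~\ref{Bog}), that theorem shows every $\mu_{\omega,D}$-semistable torsion-free Harder--Narasimhan factor of $F$ and of $T$ satisfies $\overline{\Delta}^D_\omega\ge0$, while the torsion part of $T$ has $\Ch_0^D=0$ and therefore $\overline{\Delta}^D_\omega=(\omega\Ch_1^D)^2\ge0$. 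The hard part --- and the only real obstacle --- is to assemble these nonnegative contributions into $\overline{\Delta}^D_\omega(E)\ge0$, because $\overline{\Delta}^D_\omega$ is quadratic and the cross terms must be controlled; this is where $\nu_{k,D}$-stability of $E$ is genuinely needed, since it forces the tilt-slopes of $F[1]$ and of the pieces of $T$ to interleave about $\nu_{k,D}(E)$, bounding their $\omega\Ch_1^D$ and allowing $\mathbf{c}(E)$ to be compared, via the parabola-through-$\ker Z_{k,D}$ device, with classes where $\overline{\Delta}^D_\omega$ is already known to be nonnegative.

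An alternative I would keep in reserve for the last step is the large-volume limit: the heart $\sA_{\omega,D}$ does not depend on $k$ and $\overline{\Delta}^D_\omega$ does not involve $k$, so it suffices to prove the inequality for $k\gg0$ --- where the $\nu_{k,D}$-semistable objects are, up to shift and twist, Gieseker-semistable sheaves or sheaves supported in dimension $\le1$, both covered by the classical Bogomolov inequality --- and then to propagate it down through the locally finite set of numerical walls in $\qq_{>0}$, each wall-crossing being controlled by a short exact sequence of objects of equal slope, to which the reduction step applies verbatim. Either way the theorem reduces entirely to Theorem~\ref{Bog} together with this cross-term/wall-crossing bookkeeping, which is exactly what is carried out in \cite{Bridgeland_K3}, \cite{AB} and \cite[Appendix~2]{BMS}; no geometric input beyond the classical Bogomolov inequality on $\sS$ is required.
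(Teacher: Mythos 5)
Your proposal is correct and follows essentially the same route as the paper: the paper's proof of Theorem \ref{Bog2} is a one-line citation to \cite[Theorem 3.5]{BMS}, and your argument --- negative definiteness of $\overline{\Delta}^D_{\omega}$ on $\ker Z_{k,D}$, the convexity/parabola reduction from semistable to stable objects, and the large-volume-limit plus wall-crossing reduction to the classical Bogomolov inequality (Theorem \ref{Bog}) --- is precisely the content of that cited proof. The only caveat is that your first, ``direct'' treatment of the stable case is left incomplete (you yourself flag the cross-term assembly as the obstacle), so the proof really rests on the wall-crossing alternative you keep ``in reserve,'' which is exactly the argument of \cite{BMS} that the paper invokes.
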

\begin{proof}
The proof is the same as that of \cite[Theorem 3.5]{BMS}.
\end{proof}

\subsection{The moduli stack}

For the Bridgeland stability condition $\sigma=(\sZ, \sA_{\omega})$, let $\sM^{v_{\orb}}(\sigma)$ be the moduli stack of $\sigma$-semistable objects $E\in D(\Coh(\sS))$ with
$v_{\orb}(E)=v_{\orb}$.  If $G=1$, Toda in \cite{Toda_JDG}, \cite[\S 3]{Toda_Adv} proved that the stack  $\sM^{v_{\orb}}(\sigma)$ is an Artin stack of finite type over $\cc$.
When $G$ is nontrivial,  the sheaves inside  the moduli stack $\sM^{v_{\orb}}(\sigma)$ of $G$-equivariant sheaves  must be $G$-invariant.  
Let $\sM^G$ be the moduli stack of $G$-fixed stable objects in the heart $\sA_{\omega}$. 
Let $\sM^{v_{\orb}}(\sigma)^G\subset \sM^G$ be the  corresponding 
 $G$-fixed objects in $\sM^{v_{\orb}}(\sigma)$ (i.e., forgetting about the $G$-equivariant structure).  Then 
$$\sM^{v_{\orb}}(\sigma)\to \sM^{v_{\orb}}(\sigma)^G$$
is a fibration with fiber the $G$-equivariant structures for a $G$-fixed sheaf.  The fixed part $\sM^{v_{\orb}}(\sigma)^G$
 is a closed  substack of the moduli stack of semistable objects in $S$ which is finite type over $\cc$.  Therefore the moduli stack  $\sM^{v_{\orb}}(\sigma)$ is also an Artin stack of finite type
over $\cc$.

\subsection{Joyce invariants $J^{\sigma}(v_{\orb})$}

Generalizing the construction in \S \ref{subsec_Hall_algebra}, and \S \ref{subsec_Joyce_invariant}, let $\hH(\sA_{\omega})$ be the Hall algebra of objects in the heart
$\sA_{\omega}$, which is an abelian category.

Still let $\sM(\sA_{\omega})$ be the moduli stack of objects in the abelian category $\sA_{\omega}$, which is an algebraic stack locally of finite type over $\cc$.
For the moduli stack $\sM^{v_{\orb}}(\sigma)$ corresponding to $\sigma=(\sZ, \sA_{\omega})$, we have an elment
$$\delta_{\sigma}(v_{\orb}):=[\sM^{v_{\orb}}(\sigma)\hookrightarrow \sM(\sA_{\omega})]\in \hH(\sA_{\omega})$$
Its logarithm is given by:
\begin{equation}\label{eqn_epsilon_sS_sigma}
\epsilon_{\sigma}(v_{\orb}):=
\sum_{\substack{\ell\ge 1, v_1+\cdots+v_{\ell}=v_{\orb}, v_i\in \Gamma_0^G \\
\arg\sZ(v_i)=\arg\sZ(v_{\orb})}}
\frac{(-1)^{\ell-1}}{\ell}\delta_{\sigma}(v_1)\star\cdots \star \delta_{\sigma}(v_{\ell})
\end{equation}
Then consider $P_q: \hH(\sA_{\omega})\to \qq(q^{\frac{1}{2}})$ and we define

\begin{defn}
$$J^{\sigma}(v_{\orb}):=\lim_{q^{\frac{1}{2}}\to 1}(q-1)\epsilon_{\sigma}(v_{\orb})\in \qq.$$
\end{defn}
Our main result is:

\begin{thm}\label{thm_Joyce_sigma}
The Joyce invariant $J^{\sigma}(v_{\orb})$ is independent to the stability conditions. Moreover
$$J^{\sigma}(v_{\orb})=J^{\omega}(v_{\orb}).$$
\end{thm}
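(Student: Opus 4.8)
The plan is to follow Toda's strategy from \cite{Toda_Adv} in the $G$-equivariant setting, exploiting the fact that everything is controlled by the orbifold Mukai lattice $\Gamma_0^G$ and that the new twisted-sector contributions do not affect the structure of the argument. First I would establish the key lattice-theoretic input: by the orbifold Riemann--Roch theorem stated above, $\chi(E,F) = -\langle v_{\orb}(E), v_{\orb}(F)\rangle$, so the Euler pairing on $D^b(\Coh(\sS))$ is the pullback of the Mukai pairing on $\Gamma_0^G$. This means the Hall-algebra ``integration map'' $\Psi$ to the Poisson torus associated to $(\Gamma_0^G, \langle\,,\,\rangle)$ is a well-defined Lie-algebra homomorphism, by the general results of Joyce and the fact that $\sM(\Coh(\sS))$ and $\sM(\sA_\omega)$ are algebraic stacks locally of finite type with affine stabilizers (the latter shown in \S\,``The moduli stack''). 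So the elements $\epsilon_{\omega,\sS}(v_{\orb})$ and $\epsilon_\sigma(v_{\orb})$ map to well-defined elements of the Poisson torus whose coefficients, up to the $(q-1)$-normalization, are the Joyce invariants.

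Next I would set up the wall-crossing. Between the large-volume Gieseker chamber and a given Bridgeland stability condition $\sigma_{k,D}=(Z_{k,D}, \sA_{\omega,D})$ one connects by a path in the space of stability conditions. One shows that for a Mukai vector $v_{\orb}$ of bounded norm only finitely many walls are crossed, using the support property (condition (3) of the definition of Bridgeland stability, with the quadratic form furnished by the Bogomolov-type inequality of Theorem \ref{Bog2}) to bound the Harder--Narasimhan factors' classes $v_1,\dots,v_\ell$ appearing in any wall-crossing formula. At each wall, the $\epsilon$-elements for the two adjacent chambers are related by Joyce's wall-crossing formula, a universal combinatorial identity in the Hall algebra; applying $\Psi$ and the fact that $\Psi$ is a Lie homomorphism into the Poisson torus, the wall-crossing formula for the invariants $J$ becomes a sum over iterated Poisson brackets, each term carrying a factor $\langle v_i, v_j\rangle$. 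The crucial point, exactly as in \cite[\S 4--5]{Toda_Adv}, is that on a K3 (orbifold) the relevant invariants $J(v_{\orb})$ only depend on $\langle v_{\orb}, v_{\orb}\rangle$ and are supported on primitive-times-integer classes; combined with the compatibility of the $J$'s under the $(-1)$-shift (Definition \ref{defn_Joyce_invaraints_sS}), the multilinear wall-crossing terms all cancel in pairs, so $J^\sigma(v_{\orb})$ is constant across walls. Taking the limit to the Gieseker chamber gives $J^\sigma(v_{\orb}) = J^\omega(v_{\orb})$.

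I would organize the remaining subsections of \S\,\ref{sec_quotient_K3} as: (i) construct the integration map and prove it is a Lie homomorphism (citing Joyce, checking the orbifold hypotheses); (ii) prove a no-poles / deformation-invariance statement for the Joyce invariants of $\sS$, i.e. that $J^\sigma(v_{\orb})$ depends only on the numerical class and is locally constant on the component of $\Stab(D^b(\Coh(\sS)))$ containing the geometric stability conditions $\sigma_{k,D}$; (iii) run the finite wall-crossing and the cancellation of multilinear terms; (iv) match the Gieseker chamber with $J^\omega$. The main obstacle is step (iii): one must verify that the \emph{orbifold} Mukai lattice behaves well enough — that $v_{\orb} \in C(\sS)$ and its divisors have the same ``$v^2 \ge -2$'' positivity and primitivity structure as on a genuine K3 — so that the pairwise cancellation of Poisson-bracket terms in Joyce's formula goes through verbatim. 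This reduces, via the $G$-fixed-point description $\sM^{v_{\orb}}(\sigma)\to\sM^{v_{\orb}}(\sigma)^G$ and the derived equivalence $\Phi: D(\Coh(\sS))\xrightarrow{\sim} D(\Coh(Y))$ of \cite{BKR}, to the corresponding statement on the crepant resolution $Y$, which is a genuine K3 surface where Toda's results apply; the identification $\Phi_*: H^*_{\CR}(\sS)\xrightarrow{\sim} H^*(Y)$ of lattices with their Mukai pairings is what makes this transfer legitimate. A secondary technical point is confirming that the support property holds uniformly along the chosen path, which I would handle exactly as in \cite[Appendix 2]{BMS} and \cite[Corollary 2.22]{PT}, invoking Theorem \ref{Bog2}.
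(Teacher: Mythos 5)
Your skeleton --- wall-crossing invariance across the stability manifold plus an identification of the large-volume chamber with Gieseker stability --- is the right one, but your emphasis is inverted relative to the paper, and the step you dispatch in one sentence is precisely the one the paper actually proves. The paper does not re-derive the integration map or the wall-crossing cancellation in the orbifold setting: it simply invokes \cite[Theorem 6.6]{Toda_Adv} to reduce everything to comparing $J^{\sigma_{k,D}}(v_{\orb})$ with $J^{\omega}(v_{\orb})$ for $k\gg 0$. The new content is Theorem \ref{thm_G_Nu} of \S\ref{subsec_Bridgeland_some_sS}: two propositions (orbifold analogues of Toda's Propositions 6.4 and 6.5, following the method of \cite{Sun_H}) producing an explicit constant $N=N(E)$ such that $E$ is $\nu_{k,D}$-semistable for $k\geq N$ if and only if it is $G_{\omega,D}$-semistable. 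This gives $\sM^{v_i}(\sigma_{k,D})\cong\sM^{v_i}(\omega)$ for each class $v_i$ appearing in the logarithm, hence term-by-term equality of the invariants. Your phrase ``taking the limit to the Gieseker chamber'' conceals exactly this: the identification is not formal, the bound $N$ a priori depends on the individual object, and one needs enough uniformity over the objects of the finitely many numerical classes involved to conclude that the moduli stacks coincide for a single large $k$. Without that argument the equality $J^{\sigma}(v_{\orb})=J^{\omega}(v_{\orb})$ is not established.

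A second, more local problem: your stated mechanism for the vanishing of the wall-crossing corrections --- that $J(v_{\orb})$ depends only on $\langle v_{\orb},v_{\orb}\rangle$ and that the multilinear terms ``cancel in pairs'' --- is not the argument, and as written it is circular, since the dependence of $J$ on the square of the Mukai vector alone is a \emph{consequence} of stability-independence, not an input. The actual reason in \cite{Toda_Adv} is that the category is $2$-Calabi--Yau: because $G$ acts symplectically, $\sS=[S/G]$ has trivial canonical bundle, Serre duality gives $\chi(E,F)=\chi(F,E)$, i.e.\ the orbifold Mukai pairing computes a symmetric Euler form, and therefore the bracket terms weighted by $\chi(v_i,v_j)-\chi(v_j,v_i)$ that survive after applying $P_q$ and letting $q^{1/2}\to 1$ all vanish. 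If you insist on re-running Toda's wall-crossing in the orbifold setting rather than citing it, this symmetry on $\Gamma_0^G$ is the one point you must check; your detour through the crepant resolution $Y$ via \cite{BKR} would also work for this but is unnecessary here, and the paper reserves it for the multiple cover formula.
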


\subsection{Gieseker stability and $\nu_{k, D}$-stability}\label{subsec_Bridgeland_some_sS}
We will recall the relations between the Gieseker stability and
$\nu_{k, D}$-stability.

We define the twisted Hilbert polynomial of a sheaf $E$ on $\sS$ as:
$$
G_{\omega,D}(E,m)=\frac{m^2}{2}\omega^2+m\omega\frac{\Ch_1^D(E)}{\rk(E)}+\frac{\Ch_2^D(E)}{\rk(E)}+\chi(\sO_{\sS})
$$

\begin{defn}
We say $E$ is $(\omega, D)$-twisted Gieseker (semi)stable (or
$G_{\omega,D}$ (semi)stable) if for all proper subsheaves
$F\hookrightarrow E$, we have $G_{\omega,D}(F,m)< (\leq)
G_{\omega,D}(E,m)$ for $m>>0$.
\end{defn}

We can write just  $G_{\omega,D}(E,m)$ and $\nu_{k,D}(E)$ in the following form:
\begin{align}\label{eqn_twisted_G}
G_{\omega,D}(E,m)&=\frac{m^2}{2}\omega^2+\frac{\omega\Ch_1^D(E)}{\rk(E)}m+\frac{\Ch_2^D(E)}{\rk(E)}+\chi(\sO_{\sS})\\
&=\frac{m^2}{2}\omega^2+\omega^2\mu_{\omega, D}(E)
m+\frac{\Ch_2^D(E)}{\rk(E)}+\chi(\sO_{\sS}) \nonumber
\end{align}
and
\begin{align}\label{eqn_Nu_k}
\nu_{k,D}(E,m)&=\frac{-\frac{k^2}{2}\omega^2 \Ch_0^D(E)}{\omega \Ch_1^D(E)}+\frac{\Ch_2^D(E)}{\omega \Ch_1^D(E)}\\
&=-\frac{1}{\mu_{\omega, D}(E)}\cdot
\frac{k^2}{2}+\frac{\Ch_2^D(E)}{\omega \Ch_1^D(E)}. \nonumber
\end{align}

We give the orbifold analogue of Proposition 6.4 and Proposition 6.5
in \cite{Toda_Adv}. We follow the strategy in \cite{Sun_H} given by
the third author.

\begin{prop}
For any object $E\in \Coh(\sS)\cap \sA_{\omega, D}$, there exists a
constant $N$ only depending on $E$ such that $E$ is
$G_{\omega,D}$-semistable if $E$ is $\nu_{k,D}$-semistable for some
$k\geq N$.
\end{prop}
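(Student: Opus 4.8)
The plan is to convert $\nu_{k,D}$-semistability into $G_{\omega,D}$-semistability by a limiting argument on the parameter $k$, following the comparison of the slope functions in \eqref{eqn_twisted_G} and \eqref{eqn_Nu_k}. Fix $E\in\Coh(\sS)\cap\sA_{\omega,D}$. If $E$ is $G_{\omega,D}$-unstable, there is a destabilizing subsheaf $F\hookrightarrow E$ with $G_{\omega,D}(F,m)>G_{\omega,D}(E,m)$ for $m\gg0$; comparing leading terms (since $\Ch_0>0$ for both, as $E\in\Coh(\sS)$), this forces either $\mu_{\omega,D}(F)>\mu_{\omega,D}(E)$, or $\mu_{\omega,D}(F)=\mu_{\omega,D}(E)$ together with $\Ch_2^D(F)/\rk(F)>\Ch_2^D(E)/\rk(E)$. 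In the first case, a standard boundedness argument — the Harder--Narasimhan slopes of $E$ (and hence the $\mu_{\omega,D}$-values of subsheaves that can occur) lie in a finite set depending only on $E$ — shows that $F$ can be taken from a bounded family, so only finitely many values of $\mu_{\omega,D}(F)$ occur. Since $F\subset E\in\sA_{\omega,D}$ and $\Ch_0^D(F)>0$, one has $\mu_{\omega,D}(F)>0$, so $0<\mu_{\omega,D}(F)<\mu_{\omega,D}(E)$ is impossible to push arbitrarily close; then from \eqref{eqn_Nu_k}, $\nu_{k,D}(F)-\nu_{k,D}(E)=\tfrac{k^2}{2}\bigl(\tfrac{1}{\mu_{\omega,D}(E)}-\tfrac{1}{\mu_{\omega,D}(F)}\bigr)+O(1)$, and since $\mu_{\omega,D}(F)>\mu_{\omega,D}(E)$ the coefficient of $k^2$ is positive, so for $k$ large this is $>0$, contradicting $\nu_{k,D}$-semistability of $E$.

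In the second (equal $\mu$) case the $k^2$-terms cancel and $\nu_{k,D}(F)-\nu_{k,D}(E)=\Ch_2^D(F)/(\omega\Ch_1^D(F))-\Ch_2^D(E)/(\omega\Ch_1^D(E))$, which is positive precisely because $\Ch_2^D(F)/\rk(F)>\Ch_2^D(E)/\rk(E)$ and $\omega\Ch_1^D=\omega^2\mu_{\omega,D}\cdot\rk$ with the common $\mu_{\omega,D}$ positive; again this contradicts $\nu_{k,D}$-semistability for every $k$, in particular for $k\ge N$. So in all cases, choosing $N$ large enough to dominate the finitely many $O(1)$-corrections coming from the bounded family of potential destabilizers $F$, $\nu_{k,D}$-semistability for some $k\ge N$ forces $G_{\omega,D}$-semistability.

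The remaining point is to make the boundedness of the relevant subsheaves $F$ precise: one restricts attention to $F$ that are $\mu_{\omega,D}$-semistable of slope in $(\mu_{\omega,D}^-(E),\mu_{\omega,D}^+(E)]$ — it suffices to test semistability against such $F$, since a general destabilizer can be replaced by one appearing in a Harder--Narasimhan/Jordan--Hölder refinement — and invokes the usual Grothendieck-type boundedness for sheaves with bounded rank, bounded $\mu_{\omega,D}$, and $F\subset E$ fixed; this is exactly the orbifold analogue of the estimates in \cite[Proposition 6.4, Proposition 6.5]{Toda_Adv}, and is carried out on $\sS=[S/G]$ by pulling back to $S$ and using $G$-equivariance (the generating-sheaf formalism of \cite{Nironi}), as in \cite{Sun_H}.

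\textbf{Main obstacle.} The delicate step is not the slope comparison, which is elementary once set up, but the uniformity of the constant $N$: one must know that as $F$ ranges over \emph{all} subsheaves of $E$ that could destabilize in the Gieseker sense, the quantities $\mu_{\omega,D}(F)$ and $\Ch_2^D(F)/\rk(F)$ take values in a finite (or at least bounded-below-away-from-the-bad-region) set, so that a single $N$ works. This is where the boundedness result for the orbifold surface $\sS$ — and the fact that $E$ itself is fixed — does the real work; establishing it in the DM-stack setting, rather than the surface case of \cite{Toda_Adv}, is the crux.
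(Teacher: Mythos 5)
Your proposal is correct in outline and arrives at the same case division as the paper's proof: a Gieseker--destabilizing subsheaf $F\subset E$ either has $\mu_{\omega,D}(F)>\mu_{\omega,D}(E)$, or has equal slope together with $\Ch_2^D(F)/\rk F>\Ch_2^D(E)/\rk E$; in the second case $F$ is a $\nu_{k,D}$-destabilizer for every $k$, and in the first case the coefficient of $k^2$ in $\nu_{k,D}(F)-\nu_{k,D}(E)$ is positive, so $F$ destabilizes for all large $k$. Where you genuinely diverge is in how the threshold $N$ is controlled. The paper never invokes boundedness of a family of destabilizers: it fixes $F$ to be the penultimate step of the Gieseker Harder--Narasimhan filtration (so that $E/F$ is $G_{\omega,D}$-semistable), uses the integrality $\omega\Ch_1(F)\rk E-\omega\Ch_1(E)\rk F\ge 1$ to produce a definite gap in the $k^2$-coefficient, and applies Bogomolov's inequality to the semistable quotient $E/F$ to bound $\Ch_2^D(F)$ from below by data of $E$ alone; the outcome is a completely explicit $N$ in terms of $\Ch^D(E)$ and $\mu^{\pm}_{\omega,D}(E)$. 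Your route --- Grothendieck-type boundedness of saturated subsheaves of slope bounded below, hence finitely many numerical types of potential destabilizers --- also works (and transfers to $\sS=[S/G]$ by $G$-equivariance as you say), but it is heavier machinery than this direction requires: to show $E$ is $\nu_{k,D}$-unstable for all $k\ge N$ one only needs a \emph{single} destabilizer for each $k$, and the canonical Gieseker destabilizer is itself determined by $E$, so any constant computed from its invariants already ``depends only on $E$''. Uniformity over all subobjects is the real issue in the \emph{converse} proposition, which is where the paper imports \cite[Lemma 4.1]{Sun_H}; your ``main obstacle'' paragraph somewhat misattributes the difficulty to the present statement.

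Two small points should be repaired. First, the inference ``$F\subset E\in\sA_{\omega,D}$ and $\Ch_0^D(F)>0$ imply $\mu_{\omega,D}(F)>0$'' is false in general: a subsheaf of an object of $\sT_{\omega,D}$ can have arbitrarily negative slope. What saves the argument is that a Gieseker destabilizer satisfies $\mu_{\omega,D}(F)\ge\mu_{\omega,D}(E)\ge\mu^-_{\omega,D}(E)>0$. Second, to invoke $\nu_{k,D}$-semistability of $E$ against $F$ one needs $F\hookrightarrow E$ to be a monomorphism in the heart $\sA_{\omega,D}$, not merely in $\Coh(\sS)$; this holds for your (and the paper's) choice of $F$ because both $F$ and $E/F$ then lie in $\sT_{\omega,D}$, but it deserves a sentence.
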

\begin{proof}
We assume that $E$ is $\nu_{k,D}$-semistable for some $k>0$ but not
$G_{\omega,D}$-semistable. Then one can take a subsheaf $F\subset E$
such that $E/F$ is $G_{\omega,D}$-semistable and $G_{\omega,D}(F,
m)>G_{\omega,D}(E, m)$ for $m\gg0$. Hence  by the definition of
$G_{\omega,D}$ and $\sA_{\omega, D}$, we obtain two cases:

\begin{enumerate}
\item $\mu_{\omega, D}(F)>\mu_{\omega, D}(E)>0$ and $\mu_{\omega, D}(E/F)>0$;
\item $\mu_{\omega, D}(F)=\mu_{\omega, D}(E)>0$, $\mu_{\omega, D}(E/F)>0$ and $\frac{\Ch_2^D(E)}{\rk E}<\frac{\Ch_2^D(F)}{\rk
F}$.
\end{enumerate}
It is obvious that Case (2) contradicts the
$\nu_{k,D}$-semistability of $E$. Thus one gets $\mu_{\omega,
D}(E)<\mu_{\omega, D}(F)$. This implies that $\omega\Ch_1(E)\rk
F<\omega\Ch_1(F)\rk E$. Hence one sees that
\begin{eqnarray}\label{2.9.3}
\nonumber\omega\Ch^D_1(F)\rk E-\omega\Ch^D_1(E)\rk F&=&\omega\Ch_1(F)\rk E-\omega\Ch_1(E)\rk F\geq1;\\
\nonumber\frac{\omega^2\rk E}{\omega\Ch^D_1(E)}-\frac{\omega^2\rk
F}{\omega\Ch^D_1(F)}&\geq&\frac{\omega^2}{\omega\Ch^D_1(E)\cdot\omega\Ch^D_1(F)}\\
\nonumber&=&\frac{1}{\omega\Ch^D_1(E)\cdot\mu_{\omega,D}(F)\rk F}\\
&\geq&\frac{1}{\omega\Ch^D_1(E)\cdot\mu^+_{\omega,D}(E)\rk E}.
\end{eqnarray}
From the $\nu_{k,D}$-semistability of $E$, it follows that
$$\frac{\Ch_2^D(E)-\frac{k^2}{2}\omega^{2}\Ch_0^D(E)}{\omega\Ch_1^D(E)}\geq\frac{\Ch_2^D(F)-\frac{k^2}{2}\omega^{2}\Ch_0^D(F)}{\omega\Ch_1^D(F)}.$$
Combining this and (\ref{2.9.3}), one deduces
\begin{eqnarray}\label{2.9.4}
\nonumber-\frac{k^2/2}{\omega\Ch^D_1(E)\cdot\mu^+_{\omega,D}(E)\rk
E}&\geq&-\frac{k^2}{2}\left(\frac{\omega^2\rk
E}{\omega\Ch^D_1(E)}-\frac{\omega^2\rk F}{\omega\Ch^D_1(F)}\right)\\
\nonumber&\geq&\frac{\Ch_2^D(F)}{\omega\Ch_1^D(F)}-\frac{\Ch_2^D(E)}{\omega\Ch_1^D(E)}\\
\nonumber&\geq&\frac{\Ch_2^D(F)}{\omega^2\rk
F\cdot\mu^+_{\omega,D}(E)}-\frac{\Ch_2^D(E)}{\omega\Ch_1^D(E)}\\
&>&\frac{\Ch_2^D(F)}{\omega^2\rk
E\cdot\mu^+_{\omega,D}(E)}-\frac{\Ch_2^D(E)}{\omega\Ch_1^D(E)}.
\end{eqnarray}
On the other hand, since $E/F$ is $G_{\omega,D}$-semistable,
Bogomolov's inequality gives
\begin{eqnarray*}
\Ch_2^D(F)&=&\Ch_2^D(E)-\Ch_2^D(E/F)\\
&\geq&\Ch_2^D(E)-\frac{(\omega\Ch_1^D(E/F))^2}{2\omega^2\rk(E/F)}\\
&\geq&\Ch_2^D(E)-\frac{(\omega\Ch_1^D(E)-\omega\Ch_1^D(F))^2}{2\omega^2}\\
&>&\Ch_2^D(E)-\frac{(\omega\Ch_1^D(E))^2}{2\omega^2}
\end{eqnarray*}
From this and (\ref{2.9.4}), one infers that
$$-\frac{k^2/2}{\omega\Ch^D_1(E)\cdot\mu^+_{\omega,D}(E)\rk
E}>\frac{\Ch_2^D(E)}{\omega^2\rk
E\cdot\mu^+_{\omega,D}(E)}-\frac{(\omega\Ch_1^D(E))^2}{2(\omega^2)^2\rk
E\cdot\mu^+_{\omega,D}(E)}-\frac{\Ch_2^D(E)}{\omega\Ch_1^D(E)}.$$
This implies
$$k^2<\frac{(\omega\Ch^D_1(E))^3}{(\omega^2)^2}+2\rk E\left(\mu^+_{\omega,D}(E)-\mu_{\omega,D}(E)\right)\Ch_2^D(E).$$
Therefore, one completes the proof by taking
$$N=\sqrt{\frac{(\omega\Ch^D_1(E))^3}{(\omega^2)^2}+2\rk
E\left(\mu^+_{\omega,D}(E)-\mu_{\omega,D}(E)\right)\Ch_2^D(E)}.$$
\end{proof}

\begin{prop}
For any object $E\in \Coh(\sS)\cap \sA_{\omega, D}$, there exists a
constant $N$ only depending on $E$ such that $E$ is
$\nu_{k,D}$-semistable for any $k\geq N$ if $E$ is
$G_{\omega,D}$-semistable.
\end{prop}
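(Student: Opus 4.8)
The plan is to argue by contradiction, reversing the inequalities of the previous proposition. Suppose $E$ is $G_{\omega,D}$-semistable but, for arbitrarily large $k$, is not $\nu_{k,D}$-semistable; fixing such a $k$, I would let $F\subset E$ be the maximal $\nu_{k,D}$-destabilizing subobject in $\sA_{\omega,D}$, so that $F$ is $\nu_{k,D}$-semistable and $\nu_{k,D}(F)>\nu_{k,D}(E)$. Since $G_{\omega,D}$-stability is phrased with division by the rank, I may assume $\rk E>0$, so that $E$ is a torsion-free sheaf; and since $E\in\Coh(\sS)\cap\sA_{\omega,D}=\sT_{\omega,D}$ we have $\mu_{\omega,D}(E)>0$.

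First I would reduce $F$ to a subsheaf of $E$. Writing the cohomology long exact sequence of $0\to F\to E\to Q\to 0$ for the standard $t$-structure on $D^b(\Coh(\sS))$, and using that $F$ and $Q$ have cohomology only in degrees $-1,0$ while $E$ is a sheaf, one gets $H^{-1}(F)=0$; hence $F\in\Coh(\sS)\cap\sA_{\omega,D}=\sT_{\omega,D}$, and there are exact sequences of sheaves $0\to K\to F\to G\to 0$ and $0\to G\to E\to G'\to 0$, with $K=H^{-1}(Q)\in\sF_{\omega,D}$, $G=\im(F\to E)\subseteq E$ and $G'=H^0(Q)\in\sT_{\omega,D}$. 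As $K\in\sF_{\omega,D}$ is torsion free with $\mu_{\omega,D}^+(K)\le 0$, one has $\omega\Ch^D_1(K)\le 0$. If $G=0$, then $F=K\in\sF_{\omega,D}\cap\sT_{\omega,D}=0$, contradicting $F\neq 0$; and if $F$ is a torsion sheaf, or if $\omega\Ch^D_1(F)\le 0$ — which, since $F\in\sT_{\omega,D}$, forces $F$ to be $0$-dimensional torsion — then the torsion-free quotient $G$ of the torsion sheaf $F$ vanishes, reducing to the previous case. So I may assume $\rk F>0$, $\omega\Ch^D_1(F)>0$ and $\mu_{\omega,D}(F)>0$.

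Next I would split on the sign of $\mu_{\omega,D}(F)-\mu_{\omega,D}(E)$. If $\mu_{\omega,D}(F)\ge\mu_{\omega,D}(E)$, then Gieseker semistability of $E$ applied to $G\subseteq E$ gives $\mu_{\omega,D}(G)\le\mu_{\omega,D}(E)$, hence $\omega\Ch^D_1(F)=\omega\Ch^D_1(K)+\omega\Ch^D_1(G)\le\omega^2\mu_{\omega,D}(E)\rk G$; combined with $\mu_{\omega,D}(F)\ge\mu_{\omega,D}(E)>0$ this forces $\rk G\ge\rk F$, whence $\rk K=0$, $K=0$, $F=G\subseteq E$ and $\mu_{\omega,D}(F)=\mu_{\omega,D}(E)$. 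In this situation the $k^2$-terms in (\ref{eqn_Nu_k}) cancel, so $\nu_{k,D}(F)>\nu_{k,D}(E)$ is equivalent to $\Ch^D_2(F)/\rk F>\Ch^D_2(E)/\rk E$, which contradicts the comparison of the constant terms in (\ref{eqn_twisted_G}) forced by Gieseker semistability of $E$ — and this contradiction holds for every $k$. If instead $\mu_{\omega,D}(F)<\mu_{\omega,D}(E)$, then in (\ref{eqn_Nu_k}) the coefficient of $k^2$ in $\nu_{k,D}(F)-\nu_{k,D}(E)$ is negative, equal to $-\frac{1}{2}\big(\mu_{\omega,D}(F)^{-1}-\mu_{\omega,D}(E)^{-1}\big)$, so $\nu_{k,D}(F)>\nu_{k,D}(E)$ forces
$$\frac{k^2}{2}\Big(\frac{1}{\mu_{\omega,D}(F)}-\frac{1}{\mu_{\omega,D}(E)}\Big)<\frac{\Ch^D_2(F)}{\omega\Ch^D_1(F)}-\frac{\Ch^D_2(E)}{\omega\Ch^D_1(E)},$$
and this fails once $k\ge N$ for a bound $N$ depending only on $E$, provided the slope gap on the left is bounded below and the residual term on the right is bounded above, uniformly in the possible $F$.

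The hard part will be establishing these two uniform bounds — this is the main obstacle and the only place where largeness of $k$ is used. For the left side: the numerator $\omega\Ch_1(E)\rk F-\omega\Ch_1(F)\rk E$ of $\mu_{\omega,D}(E)-\mu_{\omega,D}(F)$ is independent of the twist $D$ and is a positive element of a fixed discrete subgroup of $\qq$ — this uses that $\omega$ comes from an ample class on $\overline{\sS}$ and that $\rk$ and $\omega\Ch_1$ of $G$-equivariant sheaves take values in a fixed lattice — so it is at least $1/M$ for some $M=M(\sS,\omega)$, while $\rk F\le\rk E$ and $0<\mu_{\omega,D}(F)<\mu_{\omega,D}(E)$ control the denominator. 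For the right side: $\omega\Ch^D_1(F)$ is likewise bounded below away from $0$, and the Bogomolov-type inequality of Theorem~\ref{Bog2} applied to the $\nu_{k,D}$-semistable object $F$ gives $\Ch^D_2(F)\le(\omega\Ch^D_1(F))^2/(2\omega^2\rk F)$, while $\omega\Ch^D_1(F)=\omega^2\mu_{\omega,D}(F)\rk F<\omega^2\mu_{\omega,D}(E)\rk E$ bounds it from above. With these in hand $N$ can be taken explicitly — the resulting estimate is the orbifold counterpart of the K3 estimate in \cite{Toda_Adv} — and we conclude that $E$ is $\nu_{k,D}$-semistable for all $k\ge N$.
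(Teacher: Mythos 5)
Your overall strategy coincides with the paper's: argue by contradiction with the maximal $\nu_{k,D}$-destabilizing subobject $F$ of $E$, split into the two cases dictated by $G_{\omega,D}$-semistability, dispose of the equal-slope case by comparing the constant terms of the twisted Hilbert polynomials, and in the strict-slope case combine the integrality of $\omega\Ch_1(E)\rk F-\omega\Ch_1(F)\rk E$ with Bogomolov's inequality for the $\nu_{k,D}$-semistable object $F$ to force $k^2$ below a constant depending only on $E$. The one place where you diverge is the reduction of $F$ to a subsheaf, and this is also where your write-up has a genuine soft spot. The paper does not use the long exact sequence directly; it invokes \cite[Lemma 4.1]{Sun_H} to obtain $\rk F\le\rk E$ once $k\ge N_0$ (an explicit threshold that enters the final $N$), and deduces from this that $F$ is an honest subsheaf before running the case analysis. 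You instead show only that $F$ is a sheaf in $\sT_{\omega,D}$ fitting into $0\to K\to F\to G\to 0$ with $K=H^{-1}(E/F)\in\sF_{\omega,D}$ and $G\subseteq E$. In your case $\mu_{\omega,D}(F)\ge\mu_{\omega,D}(E)$ you correctly force $K=0$; but in the case $\mu_{\omega,D}(F)<\mu_{\omega,D}(E)$ you then assert $\rk F\le\rk E$ with no argument. When $K\ne 0$, nothing you have proved bounds $\rk F=\rk K+\rk G$, so as written that step fails; it is precisely the content of the cited lemma and the reason the threshold $N_0$ appears in the paper's bound.

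The gap is repairable inside your own framework, and in a way that makes the rank bound unnecessary. Since $K\in\sF_{\omega,D}$ gives $\omega\Ch_1^D(K)\le 0$ and $E/G=H^0(E/F)\in\sT_{\omega,D}$ gives $\omega\Ch_1^D(E/G)\ge 0$, you obtain $\omega\Ch_1^D(F)\le\omega\Ch_1^D(G)\le\omega\Ch_1^D(E)$, and this is exactly the control of the denominator of $\frac{1}{\mu_{\omega,D}(F)}-\frac{1}{\mu_{\omega,D}(E)}$ for which you wanted $\rk F\le\rk E$; the Bogomolov bound $\Ch_2^D(F)/\omega\Ch_1^D(F)\le\tfrac{1}{2}\mu_{\omega,D}(F)<\tfrac{1}{2}\mu_{\omega,D}(E)$ needs only $\rk F>0$, which you have. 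With that substitution your estimate closes and in fact yields a proof that avoids \cite[Lemma 4.1]{Sun_H} and the threshold $N_0$ altogether. Either prove the rank bound (as the paper does, by citation) or replace it by the $\omega\Ch_1^D$ bound above; as it stands the claim is unsupported.
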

\begin{proof}
The proof is a mimic of that of \cite[Theorem 1.3]{Sun_H}. We assume
that $E$ is not $\nu_{k,D}$-semistable for some $k>0$ but
$G_{\omega,D}$-semistable. Let $F$ be the $\nu_{k,D}$-maximal
subobject of $E$ in $\sA_{\omega, D}$. By \cite[Lemma 4.1]{Sun_H},
one sees that $\rk F\leq\rk E$ if
$$k\geq N_0:=\sqrt{\max\left\{\frac{\overline{\Delta}^D_{\omega}(E)}{\omega^2\rk E}-\frac{\mu^2_{\omega,D}(E)\rk E}{\omega^2+\rk E}, 0\right\}}.$$
Hence $F$ is a subsheaf of $E$ when $k\geq N_0$. The
$G_{\omega,D}$-semistability of $E$ gives two cases:
\begin{enumerate}
\item $\mu_{\omega, D}(E)>\mu_{\omega, D}(F)>0$;
\item $\mu_{\omega, D}(E)=\mu_{\omega, D}(F)>0$ and $\frac{\Ch_2^D(E)}{\rk E}\geq\frac{\Ch_2^D(F)}{\rk
F}$.
\end{enumerate}
It is obvious that Case (2) contradicts that $E$ is not
$\nu_{k,D}$-semistable. In Case (1), one obtains $\omega\Ch_1(E)\rk
F>\omega\Ch_1(F)\rk E$. Hence one sees that
\begin{eqnarray*}
\omega\Ch^D_1(E)\rk F-\omega\Ch^D_1(F)\rk E&=&\omega\Ch_1(E)\rk
F-\omega\Ch_1(F)\rk E\geq1.
\end{eqnarray*}
It implies that
\begin{eqnarray}\label{2.9.5}
\nonumber\frac{\omega^2\rk F}{\omega\Ch^D_1(F)}-\frac{\omega^2\rk
E}{\omega\Ch^D_1(E)}&\geq&\frac{\omega^2}{\omega\Ch^D_1(E)\cdot\omega\Ch^D_1(F)}\\
\nonumber&=&\frac{1}{\omega\Ch^D_1(E)\cdot\mu_{\omega,D}(F)\rk F}\\
&>&\frac{1}{\omega\Ch^D_1(E)\cdot\mu_{\omega,D}(E)\rk E}.
\end{eqnarray}
Since $E$ is not $\nu_{k,D}$-semistable, we have
$$\frac{\Ch_2^D(E)-\frac{k^2}{2}\omega^{2}\Ch_0^D(E)}{\omega\Ch_1^D(E)}<\frac{\Ch_2^D(F)-\frac{k^2}{2}\omega^{2}\Ch_0^D(F)}{\omega\Ch_1^D(F)}.$$
Combining this and (\ref{2.9.5}), one deduces
\begin{eqnarray}\label{2.9.6}
\nonumber\frac{k^2/2}{\omega\Ch^D_1(E)\cdot\mu_{\omega,D}(E)\rk
E}&<&\frac{k^2}{2}\left(\frac{\omega^2\rk
F}{\omega\Ch^D_1(F)}-\frac{\omega^2\rk
E}{\omega\Ch^D_1(E)}\right)\\
&<&\frac{\Ch_2^D(F)}{\omega\Ch_1^D(F)}-\frac{\Ch_2^D(E)}{\omega\Ch_1^D(E)}.
\end{eqnarray}

On the other hand, since $F$ is $\nu_{k,D}$-semistable, Bogomolov's
inequality gives
\begin{eqnarray*}
\frac{\Ch_2^D(F)}{\omega\Ch_1^D(F)}&\leq&\frac{\omega\Ch_1^D(F)}{2\omega^2\rk
F}\\
&<&\frac{1}{2}\mu_{\omega,D}(E)
\end{eqnarray*}
From this and (\ref{2.9.6}), one obtains
\begin{eqnarray*}
k^2&<&\omega\Ch^D_1(E)\mu_{\omega,D}(E)\rk
E\left(\mu_{\omega,D}(E)-\frac{2\Ch_2^D(E)}{\omega\Ch^D_1(E)}\right)\\
&=&\mu^3_{\omega,D}(E)\omega^2(\rk E)^2-2\mu_{\omega,D}(E)(\rk
E)\Ch_2^D(E).
\end{eqnarray*}
We finish the proof by taking
$$N=\max\left\{\sqrt{\mu^3_{\omega,D}(E)\omega^2(\rk E)^2-2\mu_{\omega,D}(E)(\rk
E)\Ch_2^D(E)}, N_0\right\}.$$

\end{proof}

The above two propositions gives an equivalence between
$G_{\omega,D}$-stability and $\nu_{k,D}$-stability:
\begin{thm}\label{thm_G_Nu}
For any $E\in \Coh(\sS)\cap \sA_{\omega, D}$, there exists a
constant $N$ only depending on $E$ such that $E$ is
$\nu_{k,D}$-semistable for $k\geq N$ if and only if $E$ is
$G_{\omega,D}$-semistable.
\end{thm}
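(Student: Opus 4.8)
The plan is to derive Theorem~\ref{thm_G_Nu} immediately from the two propositions established just above, since the theorem is nothing more than their conjunction. First I would fix $E\in \Coh(\sS)\cap \sA_{\omega, D}$ and invoke the first proposition to obtain a constant $N_1=N_1(E)$, depending only on $E$, such that $\nu_{k,D}$-semistability of $E$ for any single $k\geq N_1$ already implies $G_{\omega,D}$-semistability; then I would invoke the second proposition to obtain $N_2=N_2(E)$, again depending only on $E$, such that $G_{\omega,D}$-semistability of $E$ implies $\nu_{k,D}$-semistability for all $k\geq N_2$. Setting $N:=\max\{N_1,N_2\}$ — a constant still depending only on $E$ — completes the construction of the promised bound.

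The second step is to verify the biconditional for $k\geq N$. If $E$ is $\nu_{k,D}$-semistable, then because $k\geq N\geq N_1$ the first proposition gives that $E$ is $G_{\omega,D}$-semistable. If instead $E$ is $G_{\omega,D}$-semistable, then because $k\geq N\geq N_2$ the second proposition gives that $E$ is $\nu_{k,D}$-semistable. Together these two implications are exactly the claimed equivalence, and the proof is finished.

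I do not expect any genuine obstacle here: all of the real work — the explicit numerical estimates coming from Bogomolov's inequality (Theorems~\ref{Bog} and~\ref{Bog2}) and the Hodge index theorem on $\sS$ — is already carried out inside the two preceding propositions, so what remains is purely formal. The only point to be mildly careful about is the slight asymmetry in how those two statements quantify over $k$ (one concludes from $\nu_{k,D}$-semistability for \emph{some} $k$ above a threshold, the other produces $\nu_{k,D}$-semistability for \emph{every} $k$ above a threshold), but this discrepancy is harmless once both thresholds are replaced by their maximum, so Theorem~\ref{thm_G_Nu} follows as a formal corollary.
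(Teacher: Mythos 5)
Your proposal is correct and is exactly how the paper obtains Theorem~\ref{thm_G_Nu}: the paper states it as an immediate consequence of the two preceding propositions, and your step of taking $N=\max\{N_1,N_2\}$ (together with your remark on the ``some $k$'' versus ``every $k$'' quantifiers) is the only bookkeeping needed. No differences worth noting.
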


\subsection{Proof of Theorem \ref{thm_Joyce_sigma}}\label{subsec_proof_Joyce_sigma}

From \cite[Theorem 6.6]{Toda_Adv}, it is enough to compare $J^{\sigma_{kD}}(v_{\orb})$ for $\sigma_{kD}=(\sZ_{k,D},\sA_D)$ and $J^{\omega}(v_{\orb})$
for $k>>0$.
From the construction of  (\ref{eqn_epsilon_sS}) and   (\ref{eqn_epsilon_sS_sigma}) before,  after taking Joyce invariants we have
$$
J^{\omega}(v_{\orb})=\sum_{\ell\ge 1, v_1+\cdots+v_{\ell}=v_{\orb}}\frac{(-1)^{\ell-1}}{\ell}\prod_{i=1}^{\ell}J^{\omega}(v_i)
$$
and
$$
J^{\sigma_{kD}}(v_{\orb})=\sum_{\ell\ge 1, v_1+\cdots+v_{\ell}=v_{\orb}}\frac{(-1)^{\ell-1}}{\ell}\prod_{i=1}^{\ell}J^{\sigma_{kD}}(v_i)
$$
From Theorem \ref{thm_G_Nu} in \S \ref{subsec_Bridgeland_some_sS}, we have
$$\sM^{v_i}(\sigma_{kD})\cong \sM^{v_i}(\omega).$$
Thus we have $\prod_{i=1}^{\ell}J^{\omega}(v_i)=\prod_{i=1}^{\ell}J^{\sigma_{kD}}(v_i)$.

\section{Sheaves on local orbifold K3 surfaces}\label{sec_local_orbifold_K3}

\subsection{Crepant resolutions}

Recall the surface Deligne-Mumford stack $\sS=[S/G]$ in \S \ref{subsec_quotient_K3}, where $G$, as a finite group, acts as symplectic morphisms on $S$.  The stacky points of $\sS$ consists of
ADE type orbifold points.  Let
$$f: Y\rightarrow \overline{\sS}=S/G$$
be the minimal resolution of $S/G$.  It is a crepant resolution, and  $Y$ is also a smooth K3 surface.
The exceptional curves of $Y$ over each orbifold singular point $P\in S/G$ are given by ADE type Dynkin diagrams.  More details can be found in \cite{Huybrechts}.
Consider the diagram:
\[
\xymatrix{
&Z\ar[dl]\ar[dr]\\
[S/G]\ar[dr]_{p}&& Y\ar[dl]^{f}\\
&S/G&
}
\]
such that $[S/G]\dashrightarrow Y$ is a crepant birational morphism. This is the situation in \cite{BKR}, where $Y$ is one irreducible component in the $G$-Hilbert scheme $G-\Hilb$, and
$Z\subset Y\times S$ is the universal subscheme.  Therefore from \cite[Theorem1.2]{BKR}, there
is an equivalence
$$\Phi: D(\Coh(\sS))\stackrel{\sim}{\longrightarrow} D(\Coh(Y))$$
between derived categories.

\begin{prop}\label{prop_FM_sS_Y}
We have the following commutative diagram:
\begin{equation}\label{eqn_diagram_sS_Y}
\xymatrix{
D(\Coh(\sS))\ar[r]^{\Phi}\ar[d]& D(\Coh(Y))\ar[d]\\
K(\Coh(\sS))\ar[r]^{\Phi}\ar[d]_{\widetilde{\Ch}\cdot \sqrt{\widetilde{\td}_{\sS}}}& K(\Coh(Y))\ar[d]^{\Ch\cdot \sqrt{\td_Y}}\\
H^*_{\CR}(\sS)\ar[r]^{\Phi_*} & H^*(Y)
}
\end{equation}
such that it induces an isomorphism
$$\Phi_*: H^*_{\CR}(\sS)\to H^*(Y)$$
between the Chen-Ruan cohomology of $\sS$ and the cohomology space of $Y$.
\end{prop}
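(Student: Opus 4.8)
\emph{Proof idea.} By \cite[Theorem~1.2]{BKR} the equivalence $\Phi$ is the Fourier--Mukai functor attached to the universal subscheme $Z$, i.e.\ $\Phi(-)=p_{Y*}\bigl(p_\sS^*(-)\otimes\sO_Z\bigr)$, where $p_Y,p_\sS$ are the projections from $Y\times\sS$ and $\sO_Z$ is the structure sheaf of $Z$ regarded as an object of $D(\Coh(Y\times\sS))$. The plan is to read off all three horizontal arrows from this one kernel. The commutativity of the top square is automatic: an exact equivalence of triangulated categories induces an isomorphism $\Phi\colon K(\Coh(\sS))\xrightarrow{\sim}K(\Coh(Y))$, $[E]\mapsto[\Phi(E)]$, and this is exactly the assertion that the upper square commutes. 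For the bottom arrow I \emph{define} $\Phi_*$ to be the cohomological correspondence determined by $\sO_Z$,
$$\Phi_*(\alpha)=p_{Y*}\!\left(p_\sS^*\alpha\cdot\gamma\right),\qquad \gamma=\widetilde{\Ch}(\sO_Z)\cdot p_\sS^*\sqrt{\widetilde{\td}_{\sS}}\cdot p_Y^*\sqrt{\td_Y},$$
where the pullback, product, orbifold Chern character and the (proper) pushforward $p_{Y*}$ are those of the Chen--Ruan cohomology of the Deligne--Mumford stack $Y\times\sS$, and where $\Phi_*$ takes values in the ordinary cohomology $H^*(Y)$ because $Y$ is a scheme.

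With this definition the commutativity of the lower square is the identity $v(\Phi E)=\Phi_*\bigl(v_{\orb}(E)\bigr)$ for every $E\in D(\Coh(\sS))$, and this is the standard cohomological Fourier--Mukai computation, carried out with the Riemann--Roch theorem for Deligne--Mumford stacks (cf.\ \cite{CIJS}) in place of the ordinary one. Applying orbifold GRR to the proper morphism $p_Y$ gives
$$\Ch(\Phi E)\cdot\td_Y=p_{Y*}\bigl(\widetilde{\Ch}(p_\sS^*E)\cdot\widetilde{\Ch}(\sO_Z)\cdot\widetilde{\td}_{Y\times\sS}\bigr),$$
and since $\widetilde{\td}_{Y\times\sS}=p_\sS^*\widetilde{\td}_{\sS}\cdot p_Y^*\td_Y$, one distributes one square root of $\widetilde{\td}_{\sS}$ into the Mukai vector $v_{\orb}(E)$ on the source, one square root of $\td_Y$ into $v(\Phi E)$ on the target, and applies the projection formula to rewrite the right-hand side as $\Phi_*\bigl(v_{\orb}(E)\bigr)\cdot\td_Y$. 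Because the multiplicativity $\widetilde{\Ch}(p_\sS^*E\otimes\sO_Z)=\widetilde{\Ch}(p_\sS^*E)\cdot\widetilde{\Ch}(\sO_Z)$ is built in, this also shows $\Phi_*$ is well defined. I expect the main obstacle to be precisely this bookkeeping: one must pull Chen--Ruan classes back along $p_\sS$, push them forward along $p_Y$ via Toen's orbifold GRR on $Y\times\sS$, and keep careful track of the twisted-sector contributions and of the $\sqrt{\widetilde{\td}_{\sS}}$ versus $\sqrt{\td_Y}$ normalizations; once $\gamma$ and these conventions are fixed the identity is formal.

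It remains to see that $\Phi_*$ is an isomorphism, and this follows formally from $\Phi$ being an equivalence rather than from any surjectivity of the Chern character (indeed $\widetilde{\Ch}$ is far from surjective, on account of the transcendental cohomology and the infinite-dimensional Chow groups of $0$-cycles). The BKR quasi-inverse $\Psi$ of $\Phi$ is again a Fourier--Mukai functor, hence carries its own cohomological correspondence $\Psi_*$ defined exactly as above; since $\Psi\circ\Phi\simeq\id$ and $\Phi\circ\Psi\simeq\id$ as functors, the compatibility of composition of correspondences with composition of kernels (again the projection formula) gives $\Psi_*\circ\Phi_*=\id$ and $\Phi_*\circ\Psi_*=\id$. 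Hence $\Phi_*\colon H^*_{\CR}(\sS)\xrightarrow{\sim}H^*(Y)$. For later use I would also record that $\Phi_*$ is an isometry for the Mukai pairings on the algebraic lattices $\Gamma_0^G$ and $\Gamma_0$: from the (orbifold) Riemann--Roch identities $\chi(E,F)=-\langle v_{\orb}(E),v_{\orb}(F)\rangle$ on $\sS$ and $\chi(E',F')=-\langle v(E'),v(F')\rangle$ on $Y$, the equivalence identity $\chi(\Phi E,\Phi F)=\chi(E,F)$ yields $\langle\Phi_*v_{\orb}(E),\Phi_*v_{\orb}(F)\rangle=\langle v_{\orb}(E),v_{\orb}(F)\rangle$, which is what is used in the sequel.
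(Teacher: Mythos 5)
Your argument is correct, and it is in fact more than the paper provides: the paper's ``proof'' of Proposition~\ref{prop_FM_sS_Y} is a one-line citation of \cite{BKR} together with the known compatibility of Chen--Ruan cohomology with the crepant resolution, whereas you actually carry out the standard cohomological Fourier--Mukai argument (define $\Phi_*$ by the class $\gamma$ of the kernel, verify the lower square by orbifold GRR for the proper pushforward, and deduce bijectivity from the quasi-inverse kernel via composition of correspondences). Your definition of $\Phi_*$ agrees with the explicit formula the paper only writes down later, in \S\ref{subsec_automorphic}, where $\Phi_*(-)=Ip_{2*}(Ip_1^*(-)\cdot\widetilde{\Ch}(\sE)\cdot\sqrt{\widetilde{\td}_{\sS\times Y}})$ with the pullback and pushforward taken on the inertia stack $I\sS\times Y$; your bookkeeping with $\sqrt{\widetilde{\td}_{\sS}}$ and $\sqrt{\td_Y}$ is the same normalization. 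Two small points of care: (i) the BKR equivalence of \cite[Theorem~1.2]{BKR} is naturally stated in the direction $D(\Coh(Y))\to D(\Coh(\sS))$ with kernel $\sO_Z$, so the kernel $\sE$ of the functor $\Phi$ in the direction the paper uses is the kernel of the (Fourier--Mukai) quasi-inverse rather than literally $\sO_Z$ --- this changes nothing in your argument, which only uses that $\Phi$ is an FM functor with an FM quasi-inverse; and (ii) the pullback $p_\sS^*$ and pushforward must be read on inertia stacks (only the projection to $Y$ collapses the twisted sectors, since $Y$ is a scheme), which is exactly the ``twisted-sector bookkeeping'' you flag. Your closing remark that $\Phi_*$ is an isometry for the Mukai pairings is also correct and is precisely the property the paper uses in \S\ref{subsec_proof_multiple}.
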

\begin{proof}
This is the result in \cite{BKR}, and known result for the cohomology of the crepant resolution and the  Chen-Ruan cohomology of the stack
$\sS=[S/G]$.
\end{proof}

\subsection{Local orbifold K3 surfaces}\label{subsec_local_K3}

For the Calabi-Yau surface Deligne-Mumford stack  $\sS=[S/G]$, and the K3 surface $Y$, we take
$$\XX:=\sS\times \cc; \quad  Z=Y\times \cc.$$
Here $Z$ is called the local K3 surface and $\XX$ is called the local orbifold K3 surface.  $\XX$ is a smooth Calabi-Yau threefold Deligne-Mumford stack.
Their natural compactifications are given by
$$\overline{\XX}=\sS\times \pp^1; \quad  \overline{Z}=Y\times \pp^1. $$
Let
$$\pi:  \overline{\XX}=\sS\times \pp^1\to \pp^1; \quad \pi: \overline{Z}=Y\times \pp^1\to \pp^1$$
be projections.  We consider the abelian subcategories
$$\Coh_{\pi}(\overline{\XX})\subset \Coh(\overline{\XX}); \quad \Coh_{\pi}(\overline{Z})\subset \Coh(\overline{Z})$$
to be the subcategories consisting of sheaves supported on the fibers of $\pi$.  We denote by:
\begin{equation}\label{eqn_sD_sS_Y}
\sD_0^{\sS}:=D^b(\Coh_{\pi}(\overline{\XX})); \quad \sD_0^{Y}:=D^b(\Coh_{\pi}(\overline{Z}))
\end{equation}
the corresponding derived categories of $\Coh_{\pi}(\overline{\XX}), \Coh_{\pi}(\overline{Z})$ respectively.
We define:

\begin{defn}
$$\sD^{\sS}:=\langle \pi^*\Pic(\pp^1), \Coh_{\pi}(\overline{\XX})\rangle_{\tr}\subset D^b(\Coh(\overline{\XX}))$$
$$\sD^Y:=\langle \pi^*\Pic(\pp^1), \Coh_{\pi}(\overline{Z})\rangle_{\tr}\subset D^b(\Coh(\overline{Z})).$$
\end{defn}

\subsection{Chern characters}\label{subsec_Chern_character}

We introduce the Chern characters on the categories in \S \ref{subsec_local_K3}.
Let
$$\pi_1: \overline{\XX}=\sS\times \pp^1\to \sS$$
be the first projection morphism.

\begin{defn}
Define the homomorphisms
\begin{equation}\label{eqn_cl0_3}
\widetilde{\cl}_0: K(\sD_0^{\sS})\stackrel{\pi_{1*}}{\rightarrow} K(\sS)\stackrel{\widetilde{\Ch}}{\rightarrow} \Gamma_0^G
\end{equation}
and
\begin{equation}\label{eqn_vorb_3}
v_{\orb}: K(\sD_0^{\sS})\stackrel{\pi_{1*}}{\rightarrow} K(\sS)\stackrel{\widetilde{\Ch}\cdot \sqrt{\widetilde{\td}_{\sS}}}{\longrightarrow} \Gamma_0^G
\end{equation}
\end{defn}

For the triangulated category $\sD^{\sS}$, we have
$$\Gamma^G:=H^0(\overline{\XX})\oplus (\Gamma_0^G\boxtimes H^2(\pp^1,\qq))\subset H^*_{\CR}(\overline{\XX},\qq)$$
Thus we have a group homomorphism
$$\widetilde{\cl}:=\widetilde{\Ch}:  K(\sD^{\sS})\to \Gamma^G$$
We can write $\Gamma^G$ as:
$$\Gamma^G=\qq\oplus\qq\oplus NS(\sS)\oplus \qq^{|I_1\sS|}\oplus \qq$$
and $v_{\orb}\in \Gamma^G$ is given by $v_{\orb}=(R, r, \widetilde{\beta}, n)$ such that $(r, \widetilde{\beta}, n)\in \Gamma_0^G$ and $\widetilde{\beta}\in NS(\sS)\oplus \qq^{|I_1\sS|}$.

For the K3 surface $Y$, and $\overline{Z}=Y\times \pp^1$, we have similar Chern character morphisms as in \cite[\S 2.3]{Toda_JDG}:
$$\widetilde{\cl}_0: K(\sD_0^{Y})\stackrel{\pi_{1*}}{\rightarrow} K(Y)\stackrel{\Ch}{\rightarrow} \Gamma_0^Y,$$
where
$\Gamma_0^Y\cong \zz\oplus NS(Y)\oplus \zz$, and
\begin{equation}\label{eqn_Mukai_Y}
v: K(\sD_0^{Y})\stackrel{\pi_{1*}}{\rightarrow} K(Y)\stackrel{\Ch\cdot \sqrt{\td_Y}}{\rightarrow} \Gamma_0^Y
\end{equation}
Also for
$\sD^Y:=\langle \pi^*\Pic(\pp^1), \Coh_{\pi}(\overline{Z})\rangle_{\tr}$, we have
$$\cl=\Ch: K(\sD_0^{Y})\to \Gamma_0^Y$$
where $\Gamma^Y=\zz\oplus \Gamma_0^Y$.

\subsection{Joyce invariants in $\Coh_{\pi}(\overline{\XX})$}

Still let $\pi: \XX=\sS\times \cc\to \cc$ be the projection. Let $\Coh_{\pi}(\XX)\subset \Coh_{\pi}(\overline{\XX})$ be the subcategory of sheaves supported on the fibers on
$\pi: \XX\to \cc$.

Let $\sM_{\pi}(\XX)$ be the stack of objects in $\Coh_{\pi}(\XX)$, and this stack is an algebraic stack locally of finite type over $\cc$. Similarly as in \S \ref{subsec_Joyce_invariant}, let
$\hH(\Coh_{\pi}(\XX))$ be the Hall algebra of the category $\Coh_{\pi}(\XX)$. Let $\sM_{\omega, \XX}(v_{\orb})$ be the moduli stack of $\omega$-Gieseker semistable sheaves
$E\in \Coh_{\pi}(\XX)$ satisfying $v_{\orb}(E)=v_{\orb}$ as in (\ref{eqn_vorb_3}).  The stack  $\sM_{\omega, \XX}(v_{\orb})$ is an algebraic stack of finite type over $\cc$.
Thus there is an element
$$\delta_{\omega, \XX}(v_{\orb}):=
[\sM_{\omega, \XX}(v_{\orb})\hookrightarrow \sM_{\pi}(\XX)]\in \hH(\Coh_{\pi}(\XX))$$
Its logarithm is given by:
$$\epsilon_{\omega, \XX}(v_{\orb}):=
\sum_{\substack{\ell\ge 1, v_1+\cdots+v_{\ell}=v_{\orb}, v_i\in \Gamma_0^G\\
\overline{\chi}_{\omega,v_i}(m)=\overline{\chi}_{\omega,v_{\orb}}(m)}}\frac{(-1)^{\ell-1}}{\ell}\delta_{\omega,\XX}(v_1)\star\cdots\star\delta_{\omega,\XX}(v_{\ell})
$$

Then we can define the Joyce invariants:

\begin{defn}\label{defn_Joyce_invariants_XX}
Let
$$C(\XX):=\Im(v_{\orb}: \Coh_{\pi}(\XX)\to \Gamma_0^G)$$
We define the Joyce Invariants:
If $v_{\orb}\in C(\XX)$,
$$J^{\omega}(v_{\orb}):=\lim_{q^{\frac{1}{2}\to 1}}(q-1)\cdot P_q(\epsilon_{\omega,\XX}(v_{\orb}))$$

If $-v_{\orb}\in C(\XX)$,
$J^{\omega}(-v_{\orb})$.  $J^{\omega}(v_{\orb})=0$ otherwise.
\end{defn}

Similar to the case of K3 surfaces, the Joyce invariants $J^{\omega}(v_{\orb})$ is independent to the polarization $\omega$.

\subsection{A digression on Hilbert scheme of points on $\sS$}\label{subsec_Hilbert_scheme}

We talk about the Hilbert scheme $\Hilb^n(\sS)$ of zero dimensional substacks in  the Deligne-Mumford surface $\sS$.
A good reference can be found in \cite{BG}.  First we have
$$\Hilb(\sS)\cong \Hilb(S)^{G},$$
i.e., the Hilbert scheme of zero dimensional substacks of $\sS$ is naturally identified with the $G$-fixed Hilbert scheme of $S$.
More detail explanation of the zero dimensional substacks supported on the stacky points of $\sS$ can be found in \cite[\S 2]{BG}.

The components of $\Hilb^n(\sS)$ are given by the $K$-theory class of $\sO_{\sT}$ for $\sT\subset \sS$ the zero dimensional substack of $\sS$.
Let $P_1, \cdots, P_r\in S/G$ be the singular points where the stabilizer subgroups $G_i\subset G$ have orders $k_i$ and ADE type $\Delta(i)$ ($\Delta(i)$ is the corresponding root system (ADE type)).
One can write down the $K$-theory class as:
\begin{equation}\label{eqn_K-theory_class_sT}
[\sO_{\sT}]=n[\sO_P]+\sum_{i=1}^{r}\sum_{j=1}^{n(i)}m_j(i)[\sO_{P_i}\otimes \rho_j(i)]
\end{equation}
where $P\in \sS$ is a generic point. Around the singular point $P_i\in S/G$, we have $G_{i}\subset \SU(2)$ and $\Delta_i$ has rank $n(i)$.
Here $\rho_0(i), \rho_1(i), \cdots, \rho_{n(i)}(i)$ are the irreducible representations of $G_i$.
Let $\mathfrak{m}=\{m_j(i)\}$ and let $\Hilb^{n, \mathfrak{m}}(\sS)\subset \Hilb(\sS)$ be the component with respect to the $K$-theory class (\ref{eqn_K-theory_class_sT}).
We let
$$D_{\mathfrak{m}}:=\sum_{i=1}^{r}\sum_{j=1}^{n(i)}m_j(i)\cdot E_j(i)$$
where $E_1(i), \cdots, E_{n(i)}(i)$ are the exceptional curves over $P_i$ under the crepant resolution $Y\to S/G$.  As in \cite[\S 5]{BG}, we write
$\mathfrak{m}=\{m_j(i)\}$ as the vector $\mathbf{m}(i)\in M_{\Delta(i)}$ in the root lattice. We have
$$D^2_{\mathfrak{m}}=-\sum_{i=1}^{r}\left(\mathbf{m}(i)|\mathbf{m}(i)\right)_{\Delta(i)}$$
From \cite[Proposition 5.1]{BG}, we have
\begin{lem}\label{lem_Hilbert_sS_Y}
There exists a birational morphism between the  Hilbert scheme $\Hilb^{n, \mathfrak{m}}(\sS)$
and the Hilbert scheme $\Hilb^{n+\frac{1}{2}D^2_{\mathfrak{m}}}(Y)$.
\end{lem}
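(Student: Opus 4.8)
The plan is to exploit the derived equivalence $\Phi\colon D(\Coh(\sS))\xrightarrow{\sim}D(\Coh(Y))$ from \cite{BKR} together with the explicit description of the $G$-Hilbert scheme correspondence, following \cite[\S 5]{BG}. First I would observe that a zero-dimensional substack $\sT\subset\sS$ with $K$-theory class as in (\ref{eqn_K-theory_class_sT}) is, under $\Phi$, sent to a complex on $Y$ whose class in $K(\Coh(Y))$ can be computed via the commutative diagram of Proposition \ref{prop_FM_sS_Y}. Concretely, $[\sO_P]\mapsto[\sO_{P'}]$ for $P'$ a point on $Y$, while $[\sO_{P_i}\otimes\rho_j(i)]$ maps to a class supported on the exceptional $ADE$ configuration over $P_i$; the standard computation (the McKay correspondence of \cite{BKR}, as used in \cite{BG}) identifies the image class so that $\Phi_*$ sends the Mukai vector $v_{\orb}(\sO_{\sT})$ to $v(\sO_{\sT'})$ for an ideal sheaf quotient on $Y$. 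Chasing the degree-$4$ (length) component through $\Phi_*$, and using $\Phi_*(v_{\orb})=v_Y$ together with the relation $D_{\mathfrak m}^2=-\sum_i(\mathbf m(i)|\mathbf m(i))_{\Delta(i)}$, one finds that the length of the corresponding substack of $Y$ is $n+\tfrac12 D_{\mathfrak m}^2$.

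The key steps, in order, are: (1) on each formal/\'etale neighborhood of a singular point $P_i$, reduce to the local model $[\cc^2/G_i]$ versus the minimal resolution of $\cc^2/G_i$, where the correspondence between $G_i$-clusters and subschemes supported on the exceptional curve is classical (Kapranov--Vasserot, Ito--Nakajima); (2) match the $K$-theory class (\ref{eqn_K-theory_class_sT}), i.e.\ the multiplicities $m_j(i)$ of $[\sO_{P_i}\otimes\rho_j(i)]$, with the intersection data of the divisor $D_{\mathfrak m}=\sum m_j(i)E_j(i)$ on $Y$; (3) compute the Euler characteristic / length shift, which is exactly $\tfrac12 D_{\mathfrak m}^2$ by Riemann--Roch on the exceptional curves together with adjunction (each $E_j(i)$ is a $(-2)$-curve, so the shift is governed by the Cartan matrix of $\Delta(i)$, giving the quadratic form $(\mathbf m(i)|\mathbf m(i))_{\Delta(i)}$); (4) globalize: away from the singular points the correspondence is the identity, so the $n[\sO_P]$ part contributes $n$ points on $Y$, and the total length is $n+\tfrac12 D_{\mathfrak m}^2$; (5) check that this assignment is algebraic and birational — it is an isomorphism over the locus of reduced length-$n$ substacks avoiding the stacky points and extends to a birational morphism by \cite[Proposition 5.1]{BG}.

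The main obstacle I expect is step (3) together with the birationality claim in step (5): one must verify that the derived equivalence $\Phi$, which a priori only takes complexes to complexes, actually takes the structure sheaf of a zero-dimensional substack of $\sS$ to the structure sheaf of an honest zero-dimensional subscheme of $Y$ (not a genuine two-term complex) on a dense open locus, and that the resulting map of Hilbert schemes is a morphism. This is precisely where one invokes \cite[Proposition 5.1]{BG}: the $G$-constellation/Hilbert-scheme formalism of \cite{BKR} gives a family version of the correspondence, and the non-stacky locus provides the open subset on which it is an isomorphism. The length shift itself is then a formal consequence of Grothendieck--Riemann--Roch applied to $\Phi$, using $\td_Y$ versus $\widetilde{\td}_{\sS}$ and the fact that $\Phi$ preserves Euler pairings (Proposition \ref{prop_FM_sS_Y}); the sign and the factor $\tfrac12$ come out of the intersection form $-(\,\cdot\,|\,\cdot\,)_{\Delta(i)}$ on the root lattice, matching the stated formula $D_{\mathfrak m}^2=-\sum_i(\mathbf m(i)|\mathbf m(i))_{\Delta(i)}$.
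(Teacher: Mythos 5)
First, a point of comparison: the paper does not prove this lemma at all --- it is imported verbatim from \cite[Proposition 5.1]{BG}, with no argument supplied. Your step (5) also ends by invoking \cite[Proposition 5.1]{BG} for exactly the birationality being asserted, so as a self-contained proof your proposal is circular at the decisive point: steps (1)--(4) are heuristic set-up, and the one claim that actually needs an argument (that the pointwise correspondence is an algebraic map of Hilbert schemes, birational onto its image) is delegated back to the reference the lemma is quoting.

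Second, and more seriously, the central mechanism of your first paragraph fails as stated. You want $\Phi$ to carry $\sO_{\sT}$ to (generically) the structure sheaf of a zero-dimensional subscheme of $Y$ of length $n+\tfrac12 D^2_{\mathfrak m}$. But, as the surrounding discussion in the paper records, under the McKay correspondence the class $[\sO_{P_i}\otimes\rho_j(i)]$ goes to a class supported on the exceptional curve $E_j(i)$ with nonzero first Chern component, so $\Phi(\sO_{\sT})$ has $c_1=\pm D_{\mathfrak m}\neq 0$ whenever $\mathfrak m\neq 0$; it therefore cannot have the Mukai vector of $\sO_W$ for any zero-dimensional $W\subset Y$, and no dense open locus fixes this. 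The argument behind \cite[Proposition 5.1]{BG} instead transports the rank-one objects: one sends the ideal sheaf $I_{\sT}$ through $\Phi$, identifies $\Hilb^{n,\mathfrak m}(\sS)$ with a moduli space of stable objects on $Y$ with a primitive Mukai vector $v_Y=(1,D',s)$, and then compares with $\Hilb^{\langle v_Y,v_Y\rangle/2+1}(Y)$ by twisting away $c_1$ (equivalently, by the general birationality of moduli spaces of sheaves on a K3 with Hodge-isometric primitive Mukai vectors). The shift $\tfrac12 D^2_{\mathfrak m}$ is then the discrepancy $\langle v_Y,v_Y\rangle/2+1-n$ computed from $D^2_{\mathfrak m}=-\sum_i(\mathbf m(i)\,|\,\mathbf m(i))_{\Delta(i)}$, so your arithmetic in step (3) is the right computation --- but it must be hung on the rank-one objects, not on $\sO_{\sT}$.
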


Recall the isomorphism $\Phi_*:  H^*_{\CR}(\sS)\to H^*(Y)$ in Proposition \ref{prop_FM_sS_Y}, if there is a Mukai vector
$v_{\orb}\in \Gamma_0^G$, then
$v_Y:=\Phi_*(v_{\orb})$ is a Mukai vector in $\Gamma_0^Y$.
We can write $v_{\orb}\in \Gamma_0^G$ as
$$v_{\orb}=(r, (\beta, \mathfrak{m}), n)$$
where $\mathfrak{m}=\{m_j(i)\}$ corresponding to the stacky points $P_1, \cdots, P_r$. Under the crepant resolution morphism
$$\sigma: Y\to S/G$$
we have
$m_j(i)[\sO_{P_i}\otimes \rho_j(i)]$ correspond to $m_j(i)[E_j(i)]$.
Let
$$n:=\langle v_Y, v_Y\rangle/2+1-\frac{1}{2}D^2_{\mathfrak{m}}.$$
Since under the isomorphism $\Phi_*: H^*_{\CR}(\sS)\to H^*(Y)$,
$\langle v_{\orb}, v_{\orb}\rangle=\langle v_Y, v_Y\rangle$.

\begin{lem}\label{lem_vorb_vY}
For any Mukai vector $v_{\orb}=(r, (\beta, \mathfrak{m}), n)\in \Gamma_0^G$ such that
$v_Y=\Phi_*(v_{\orb})$.  There is a birational morphism
$$\Hilb^{n, \mathfrak{m}}(\sS)\dasharrow  \Hilb^{\langle v_Y, v_Y\rangle/2+1}(Y).$$
\end{lem}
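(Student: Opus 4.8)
The statement will follow by combining Lemma~\ref{lem_Hilbert_sS_Y} with the numerical identity
$$n+\tfrac12 D^2_{\mathfrak{m}}=\tfrac12\langle v_Y,v_Y\rangle+1.$$
Indeed, Lemma~\ref{lem_Hilbert_sS_Y} already produces a birational morphism $\Hilb^{n,\mathfrak{m}}(\sS)\dashrightarrow\Hilb^{\,n+\frac12 D^2_{\mathfrak{m}}}(Y)$, and once the identity is known its target is literally $\Hilb^{\langle v_Y,v_Y\rangle/2+1}(Y)$. So the plan is: recall Lemma~\ref{lem_Hilbert_sS_Y}, prove the identity, and conclude. The identity is exactly the way $n$ was normalized in the paragraph preceding the statement, so the real task is to check that that normalization is the one forced by $v_{\orb}$.

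For this I would first use that $\Phi_*\colon H^*_{\CR}(\sS)\to H^*(Y)$ is an isometry for the Mukai pairings, whence $\langle v_Y,v_Y\rangle=\langle v_{\orb},v_{\orb}\rangle$; this follows from the diagram of Proposition~\ref{prop_FM_sS_Y} together with the fact that the equivalence $\Phi$ preserves the Euler form $\chi(-,-)$, which orbifold, resp.\ ordinary, Riemann--Roch identifies with $-\langle v_{\orb}(E),v_{\orb}(F)\rangle$ on $\sS$, resp.\ $-\langle v(E),v(F)\rangle$ on $Y$. Next I would unwind $\langle v_{\orb},v_{\orb}\rangle$ in $\widetilde{H}_{\CR}(\sS)$: since the Chen--Ruan Poincar\'e pairing on $H^2_{\CR}(\sS)=H^{1,1}(\sS)\oplus H^0(I_1\sS)$ is block diagonal --- the untwisted sector and the twisted sectors being distinct components of the inertia stack --- the Mukai form on $v_{\orb}$ splits as the usual K3 Mukai form on the rank/$\beta$/$H^4$ part plus the Chen--Ruan self-pairing $\langle\mathfrak{m},\mathfrak{m}\rangle_{\CR}$ of the twisted part. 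Finally the McKay correspondence of \cite{BKR,BG} --- precisely what identifies $\mathfrak{m}=\{m_j(i)\}$ with $D_{\mathfrak{m}}=\sum_{i,j}m_j(i)E_j(i)$ as recalled before the statement --- carries the Chen--Ruan pairing on the twisted sectors over each $P_i$ to the intersection form on the exceptional curves $E_j(i)$, so that $\langle\mathfrak{m},\mathfrak{m}\rangle_{\CR}=-\sum_i(\mathbf{m}(i)\,|\,\mathbf{m}(i))_{\Delta(i)}=D^2_{\mathfrak{m}}$. Combining these with the $K$-theory class \eqref{eqn_K-theory_class_sT}, which ties the rank, $\beta$ and $H^4$ coordinates of $v_{\orb}$ to the generic length $n$ of a substack in the component $\Hilb^{n,\mathfrak{m}}(\sS)$, yields the displayed identity.

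Putting the pieces together, Lemma~\ref{lem_Hilbert_sS_Y} gives $\Hilb^{n,\mathfrak{m}}(\sS)\dashrightarrow\Hilb^{\,n+\frac12 D^2_{\mathfrak{m}}}(Y)=\Hilb^{\langle v_Y,v_Y\rangle/2+1}(Y)$, which is the assertion. I expect the delicate point to be not the arithmetic but the McKay bookkeeping behind $\langle\mathfrak{m},\mathfrak{m}\rangle_{\CR}=D^2_{\mathfrak{m}}$: one must know that the correspondence $[\sO_{P_i}\otimes\rho_j(i)]\leftrightarrow[E_j(i)]$ used both to index the components of $\Hilb(\sS)$ in Lemma~\ref{lem_Hilbert_sS_Y} (via \eqref{eqn_K-theory_class_sT}) and to define $D_{\mathfrak{m}}$ is the one induced by $\Phi_*$ on $K$-theory and cohomology in Proposition~\ref{prop_FM_sS_Y}, so that the pair $(n,\mathfrak{m})$ read off from $v_{\orb}$ is genuinely the index of $\Hilb^{n,\mathfrak{m}}(\sS)$. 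This compatibility is essentially the content of \cite{BG}; granting it, the lemma follows from the computations above.
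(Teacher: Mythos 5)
Your proposal is correct and follows essentially the same route as the paper, whose entire proof is the one-line citation of Lemma~\ref{lem_Hilbert_sS_Y} together with the normalization $n=\langle v_Y,v_Y\rangle/2+1-\tfrac12 D^2_{\mathfrak{m}}$ fixed in the preceding paragraph. Your additional verification that this normalization is the one forced by $v_{\orb}$ via the isometry $\Phi_*$ and the McKay identification $\langle\mathfrak{m},\mathfrak{m}\rangle_{\CR}=D^2_{\mathfrak{m}}$ is detail the paper leaves implicit, and it is consistent with the sources it cites.
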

\begin{proof}
This is from Lemma \ref{lem_Hilbert_sS_Y}.
\end{proof}

\subsection{Multiple cover formula}

Recall the Joyce invariant $J^{\omega}(v_{\orb})$ for $v_{\orb}\in \Gamma_0^G$ in Definition \ref{defn_Joyce_invariants_XX}.  Since the invariant is independent to the
polarization $\omega$, we just write the Joyce invariant as $J(v_{\orb})$.

\begin{thm}\label{thm_multiple_cover}
There is a multiple cover formula for $J(v_{\orb})$:
$$J(v_{\orb})=\sum_{k|v_{\orb}, k\ge 1}
\frac{1}{k^2}\chi(\Hilb^{n, \mathfrak{m}}(\sS))
$$
where  the data $n, \mathfrak{m}$ are determined by
$\frac{1}{k}v_{\orb}=(r, (\beta, \mathfrak{m}), n)$.
\end{thm}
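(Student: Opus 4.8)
The plan is to reduce the multiple cover formula for $J(v_{\orb})$ on the local orbifold K3 surface $\XX = \sS\times\cc$ to the already-known multiple cover formula for $J(v_Y)$ on the local K3 surface $Z = Y\times\cc$ (established in \cite{Toda_JDG}, \cite{MT}), via the derived equivalence $\Phi$ and the Hilbert-scheme comparison of \S\ref{subsec_Hilbert_scheme}. First I would pass to the compactifications $\overline{\XX} = \sS\times\pp^1$ and $\overline{Z} = Y\times\pp^1$ and set up the Joyce invariants $\overline{J}_{\overline{\XX}}(v_{\orb})$ and $\overline{J}_{\overline{Z}}(v_Y)$ on the categories $\sD^{\sS}$, $\sD^Y$ introduced in \S\ref{subsec_local_K3}, using the Chern character maps of \S\ref{subsec_Chern_character}. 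The derived equivalence $\Phi: D(\Coh(\sS))\xrightarrow{\sim} D(\Coh(Y))$ of \cite{BKR} extends to an equivalence $\Phi: \sD_0^{\sS}\xrightarrow{\sim}\sD_0^Y$ (and on $\sD^{\sS}$, $\sD^Y$) compatible, by Proposition \ref{prop_FM_sS_Y}, with the isomorphism $\Phi_*: H^*_{\CR}(\sS)\to H^*(Y)$ sending $v_{\orb}\mapsto v_Y$ and preserving the Mukai pairing. Since an equivalence of abelian categories induces an isomorphism of Hall algebras commuting with $P_q$, and since $\Phi_*$ matches the stability data (central charges / reduced Hilbert polynomials) on the two sides, it identifies $\epsilon$-elements and hence gives $\overline{J}_{\overline{\XX}}(v_{\orb}) = \overline{J}_{\overline{Z}}(v_Y)$.

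Next I would relate the compactified invariants to the non-compact ones. The projection $\pi:\overline{\XX}\to\pp^1$ and the two affine charts $\cc\subset\pp^1$ give, by the same localization/deformation-invariance argument as in \cite{Toda_JDG}, the relation $\overline{J}_{\overline{\XX}}(v_{\orb}) = 2\,J_{\XX}(v_{\orb})$ and likewise $\overline{J}_{\overline{Z}}(v_Y) = 2\,J_{Z}(v_Y)$; combining with the previous paragraph yields $J_{\XX}(v_{\orb}) = J_{Z}(v_Y)$. Now invoke the multiple cover formula on $Z$:
$$J_{Z}(v_Y) = \sum_{k\mid v_Y,\, k\ge 1}\frac{1}{k^2}\,\chi\!\left(\Hilb^{\langle v_Y/k,\, v_Y/k\rangle/2+1}(Y)\right).$$
Divisibility of $v_{\orb}$ in $\Gamma_0^G$ matches divisibility of $v_Y$ in $\Gamma_0^Y$ under $\Phi_*$ (it is a lattice isomorphism), so the index sets of the two sums agree. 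For each $k$, writing $\tfrac1k v_{\orb} = (r,(\beta,\mathfrak{m}),n)$, Lemma \ref{lem_vorb_vY} gives a birational morphism $\Hilb^{n,\mathfrak{m}}(\sS)\dashrightarrow \Hilb^{\langle v_Y/k, v_Y/k\rangle/2+1}(Y)$. Birational smooth projective varieties that are moreover holomorphic symplectic — which both Hilbert schemes are, being smooth since $\sS$, $Y$ are smooth surfaces — are deformation equivalent (Huybrechts), hence have equal Euler characteristics; in any case $\chi$ is a birational invariant of smooth projective varieties that are $K$-equivalent here. This gives $\chi(\Hilb^{n,\mathfrak{m}}(\sS)) = \chi(\Hilb^{\langle v_Y/k, v_Y/k\rangle/2+1}(Y))$ term by term, and the theorem follows.

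The main obstacle I expect is the second paragraph: carefully establishing $\overline{J}_{\overline{\XX}}(v_{\orb}) = 2 J_{\XX}(v_{\orb})$ and the full compatibility of $\Phi$ with the Hall-algebra / Joyce-invariant formalism on the \emph{non-coherent-heart} categories $\sD_0^{\sS}$, $\sD^{\sS}$ (which are not abelian — one works with the perverse heart of sheaves supported on fibers, as in Toda). One must check that $\Phi$ sends the relevant $t$-structure and the moduli stacks of semistable objects on the orbifold side to the corresponding ones on $Y$, that the stacks are of finite type so that $P_q$ and the limit $q^{1/2}\to 1$ make sense (this uses the finiteness argument already given in \S ``The moduli stack'' on the $G$-fixed loci), and that the automorphic/wall-crossing behavior of $J_{\XX}$ matches that of $J_Z$ under $\Phi_*$. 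The orbifold-specific input — that all stacky points are isolated $ADE$ points, so the relevant Hilbert schemes decompose as in \eqref{eqn_K-theory_class_sT} and the comparison with $Y$ is governed by the root-lattice data of \cite{BG} — is exactly what makes the term-by-term matching of Euler characteristics work. Everything else is a transcription of Toda's argument through the equivalence $\Phi$.
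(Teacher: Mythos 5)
Your proposal is correct and follows essentially the same route as the paper: transfer $J_{\XX}(v_{\orb})$ to $J_{Z}(v_Y)$ via the Bridgeland--King--Reid equivalence together with the factor-of-$2$ comparison between the compactified and non-compact invariants (Proposition \ref{prop_Jbar_sS_Y} and Lemma \ref{lem_overlineJ_J}), invoke the Toda--Maulik--Thomas multiple cover formula on $Y$, and match Euler characteristics term by term using the birational comparison of Hilbert schemes from Lemma \ref{lem_vorb_vY}. Your added justifications --- that $\Phi_*$ preserves divisibility as a lattice isomorphism and that the birational holomorphic symplectic Hilbert schemes have equal Euler characteristics --- only make explicit what the paper's proof leaves implicit.
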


We prove Theorem \ref{thm_multiple_cover} in the following sections.

\subsection{Bridgeland stability conditions on $\sD_0^{\sS}$ and $\sD_0^Y$}

On the category $\Coh_{\pi}(\overline{\XX})$ (or $\Coh_{\pi}(\overline{Z})$), we have the classical slope stability as in \S \ref{subsec_Bridgeland_stability}.
For $E\in \Coh_{\pi}(\overline{\XX})$,
$$\mu_{\omega}(E)=\frac{\omega\cdot c_1(E)}{\rk(E)}.$$
We still have the maximal slope $\mu_{\omega}^+(E)$ and minimal slope $\mu_{\omega}^-(E)$ in the Harder-Narasimhan filtration of $E$.
We also have the torsion pair $(\sF_{\omega}, \sT_{\omega})$ as in (\ref{eqn_torsion_pair}). Let
$$\sB_{\omega}:=\langle \sF_{\omega}[1], \sT_{\omega}\rangle\subset \sD_0^{\sS}$$
Then $\sB_{\omega}$ is the heart of a bounded $t$-structure on $\sD_0^{\sS}$.  Note that replacing $\omega$ by $t\omega$ does not change $\sB_{\omega}$ for $t>0$.
Let us define
$$\sZ_{t\omega}: K(\sB_{\omega})\to \cc$$
by
$$E\mapsto \int_{\sS}e^{-it\omega}\Ch(E)$$
where $\Ch(E)=(\Ch_0(E), \Ch_1(E), \Ch_2(E))\in H^*(\sS)$ by the general Chern character.  Then
$$\sZ_{t\omega}(E)=-\Ch_2(E)+\frac{t^2 \omega^2}{2}\Ch_0(E)+i t\omega \Ch_1(E).$$
The pair $\sigma_{t\omega}=(\sZ_{t\omega}, \sB_{\omega})$ is a Bridgeland stability condition, i.e.,
$$\sigma_{t\omega}=(\sZ_{t\omega}, \sB_{\omega})\in \Stab_{\Gamma_0^G}(\sD_0^{\sS})$$
where $\Stab_{\Gamma_0^G}(\sD_0^{\sS})$ is the Bridgeland stability manifold.

Let $\hH(\sB_{\omega})$ be the Hall algebra of the abelian category $\sB_{\omega}$.  Let $\sM_{t\omega}(v_{\orb})\subset \sM(\sB_{\omega})$ be the moduli substack
of $\sZ_{t\omega}$-semistable objects $E\in \sB_{\omega}$ with $\widetilde{\cl}(E)=v_{\orb}$.  Then we have an element
$$\delta_{\sigma_{t\omega}}(v_{\orb}):=
[\sM_{t\omega}(v_{\orb})\hookrightarrow \sM(\sB_{\omega})]\in \hH(\sB_{\omega})$$
and its logarithm
\begin{equation}\label{eqn_epsilon_XX_sigma}
\epsilon_{\sigma_{t\omega}}(v_{\orb}):=
\sum_{\substack{\ell\ge 1, v_1+\cdots+v_{\ell}=v_{\orb}, v_i\in \Gamma_0^G \\
\arg\sZ_{t\omega}(v_i)=\arg\sZ_{t\omega}(v_{\orb})}}
\frac{(-1)^{\ell-1}}{\ell}\delta_{\sigma_{t\omega}}(v_1)\star\cdots \star \delta_{\sigma_{t\omega}}(v_{\ell})
\end{equation}

We let
$$C(\sB_{\omega}):=\Im(\widetilde{\cl}_0: \sB_{\omega}\to \Gamma_0^G)$$

\begin{defn}\label{defn_Joyce_invariants_XX}
For $v_{\orb}\in C(\sB_{\omega})$, we define
$$\overline{J}^{\sigma_{t\omega}}(v_{\orb})=\lim_{q^{\frac{1}{2}\to 1}}(q-1)P_q(\epsilon_{\sigma_{t\omega}}(v_{\orb})).$$

For  $-v_{\orb}\in C(\sB_{\omega})$, we define $\overline{J}^{\sigma_{t\omega}}(v_{\orb})=\overline{J}^{\sigma_{t\omega}}(-v_{\orb})$.

For $v_{\orb}\notin C(\sB_{\omega})$, $\overline{J}^{\sigma_{t\omega}}(v_{\orb})=0$.
\end{defn}

\subsection{The invariant $\overline{J}^{\omega}(v_{\orb})$}

We work on $\Coh_{\pi}(\overline{\XX})$. Similarly from Definition \ref{defn_Joyce_invariants_XX}, we can define Joyce invariant
$$\overline{J}^{\omega}(v_{\orb})\in\qq$$
counting $\omega$-Gieseker semistable sheaves $E\in \Coh_{\pi}(\overline{\XX})$ with $v_{\orb}(E)=v_{\orb}\in \Gamma_0^G$.  Also from
\cite[Theorem 4.21]{Toda_JDG}, $\overline{J}^{\omega}(v_{\orb})$ does not depend on $\omega$.  We have

\begin{lem}
For any $v_{\orb}\in \Gamma_0^G$,
$$\overline{J}^{\omega}(v_{\orb})=\overline{J}^{\sigma_{t\omega}}(v_{\orb}).$$

For any $v_{Y}\in \Gamma_0^Y$,
$$\overline{J}^{\omega}(v_{Y})=\overline{J}^{\sigma_{t\omega}}(v_{Y}).$$
\end{lem}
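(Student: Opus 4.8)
The statement asserts that the Joyce invariant computed with $\omega$-Gieseker semistability on $\Coh_{\pi}(\overline{\XX})$ coincides with the Joyce invariant computed with the Bridgeland stability condition $\sigma_{t\omega} = (\sZ_{t\omega}, \sB_{\omega})$ on $\sD_0^{\sS}$, and similarly for $\overline{Z}$. The plan is to run exactly the same wall-crossing argument used in the proof of Theorem \ref{thm_Joyce_sigma}, but now on the three-fold categories $\Coh_\pi(\overline{\XX})$ and $\sB_\omega$ instead of $\Coh(\sS)$ and $\sA_{\omega,D}$. First I would invoke \cite[Theorem 6.6]{Toda_Adv} (or its analogue for $\sD_0^{\sS}$, which holds verbatim since the relevant statements about the stability manifold only use the lattice $\Gamma_0^G$ and finiteness of the moduli stacks) to reduce to comparing $\overline{J}^{\sigma_{t\omega}}(v_{\orb})$ for $t \gg 0$ with $\overline{J}^{\omega}(v_{\orb})$. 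Here one uses that the large-volume limit $t \to \infty$ of the path of Bridgeland stability conditions $\sigma_{t\omega}$ is exactly Gieseker stability, so the wall-and-chamber structure on the segment $t \in [t_0, \infty)$ is finite and the endpoint chamber recovers Gieseker-semistable objects.

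Second, as in \S \ref{subsec_proof_Joyce_sigma}, I would expand both logarithms: from the definitions (\ref{eqn_epsilon_XX_sigma}) and the analogous formula for $\epsilon_{\omega,\XX}$, after applying $P_q$ and taking the $q^{1/2} \to 1$ limit one gets
$$
\overline{J}^{\omega}(v_{\orb}) = \sum_{\ell \ge 1,\ v_1 + \cdots + v_\ell = v_{\orb}} \frac{(-1)^{\ell-1}}{\ell} \prod_{i=1}^{\ell} \overline{J}^{\omega}(v_i)
$$
and the same with $\sigma_{t\omega}$ in place of $\omega$. So it suffices to show that for each individual primitive-type class $v_i$ (those with reduced Hilbert polynomial, resp. central-charge argument, equal to that of $v_{\orb}$) one has an isomorphism of moduli stacks $\sM_{t\omega}(v_i) \cong \sM_{\omega,\XX}(v_i)$ for $t \gg 0$, which forces $\overline{J}^{\sigma_{t\omega}}(v_i) = \overline{J}^{\omega}(v_i)$ since $P_q$ is a stack motivic invariant. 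For the second assertion, with $v_Y \in \Gamma_0^Y$, the identical argument applies on $\overline{Z} = Y \times \pp^1$; this is literally the situation of \cite{Toda_JDG}, so one could also just cite \cite[Theorem 4.21]{Toda_JDG} directly.

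Third, the heart of the matter — and the step I expect to be the main obstacle — is establishing the stack isomorphism $\sM_{t\omega}(v_i) \cong \sM_{\omega,\XX}(v_i)$ for $t$ large, i.e. the equivalence, for objects $E \in \Coh_\pi(\overline{\XX})$ with fixed class, between $\sZ_{t\omega}$-semistability in $\sB_\omega$ and $\omega$-Gieseker semistability. On the surface $\sS$ this was Theorem \ref{thm_G_Nu}, proved via two comparison propositions built on Bogomolov's inequality (Theorems \ref{Bog}, \ref{Bog2}). For $\Coh_\pi(\overline{\XX}) = \Coh_\pi(\sS \times \pp^1)$ one reduces to the surface case fiberwise: an object supported on a fiber $\sS_t$ is essentially a sheaf on $\sS$ (together with its scheme-theoretic thickening data along $\pp^1$), the central charge $\sZ_{t\omega}$ is pulled back from the surface computation via $\pi_{1*}$ as in (\ref{eqn_cl0_3})–(\ref{eqn_vorb_3}), and the tilt $\sB_\omega = \langle \sF_\omega[1], \sT_\omega\rangle$ matches $\sA_{\omega,0}$ fiberwise. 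The subtlety is that objects of $\Coh_\pi(\overline{\XX})$ need not be scheme-theoretically supported on a single reduced fiber, so one must check that the HN/Jordan–Hölder factors with respect to either stability are still fiber-supported and that the comparison constant $N$ from Propositions preceding Theorem \ref{thm_G_Nu} can be chosen uniformly; bounded support in the $\pp^1$-direction together with boundedness of the family of semistable sheaves handles this. Once this fiberwise translation is set up carefully, the argument is a word-for-word repetition of \S \ref{subsec_proof_Joyce_sigma}.
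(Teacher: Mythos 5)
Your argument is correct in substance, but it is considerably more than what the paper actually does: the paper's entire proof of this lemma is the two citations ``Theorem \ref{thm_Joyce_sigma} and \cite[Theorem 4.24]{Toda_JDG}.'' That is, the orbifold surface comparison (Gieseker vs.\ tilt stability for $t\gg 0$, via Theorem \ref{thm_G_Nu}) is quoted from \S\ref{subsec_proof_Joyce_sigma}, and the passage from the surface to the fiber-supported category $\Coh_\pi(\overline{\XX})$ (resp.\ $\Coh_\pi(\overline{Z})$) is delegated wholesale to Toda's theorem, whose proof is exactly the fiberwise reduction you describe in your third step. So you are reconstructing the content behind the citations rather than taking a different route; what your version buys is that the orbifold case of the threefold-level statement is actually argued rather than asserted by analogy, which is arguably needed since \cite[Theorem 4.24]{Toda_JDG} is literally only the $Y\times\pp^1$ case. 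One point worth tightening in your step two: the index sets of the two logarithms (\ref{eqn_epsilon_XX_intro}) and (\ref{eqn_epsilon_XX_sigma}) are not literally the same for a fixed $t$ --- equality of $\arg\sZ_{t\omega}$ at one value of $t$ is a single linear condition, weaker than equality of reduced Hilbert polynomials --- so the term-by-term comparison requires first noting that only finitely many classes occur as HN factors of objects of class $v_{\orb}$ (boundedness), and that for $t$ large relative to this finite set the two phase conditions coincide. The paper's own \S\ref{subsec_proof_Joyce_sigma} elides this as well, so it is not a defect of your proposal relative to the paper, but it is the one place where ``word-for-word repetition'' hides a genuine (standard) argument.
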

\begin{proof}
This is from Theorem \ref{thm_Joyce_sigma} proved in \S \ref{subsec_proof_Joyce_sigma}, and
\cite[Theorem 4.24]{Toda_JDG}.
\end{proof}

Recall that $\overline{J}^{\omega}(v_{\orb})$ is defined as
$$\lim_{q^{\frac{1}{2}}\to 1}(q-1)P_q(\epsilon_{\omega, \overline{\XX}}(v_{\orb}))$$
where
$$\epsilon_{\omega, \overline{\XX}}(v_{\orb}):=
\sum_{\substack{\ell\ge 1, v_1+\cdots+v_{\ell}=v_{\orb}, v_i\in \Gamma_0^G\\
\overline{\chi}_{\omega,v_i}(m)=\overline{\chi}_{\omega,v_{\orb}}(m)}}\frac{(-1)^{\ell-1}}{\ell}\delta_{\omega,\overline{\XX}}(v_1)\star\cdots\star\delta_{\omega,\overline{\XX}}(v_{\ell})
$$

Also since the invariants  $\overline{J}^{\omega}(v_{\orb})$ also are independent to the stability conditions.
We just write them as $\overline{J}(v_{\orb})$.
Then the same arguments as in \cite[Lemma 4.25, Lemma 4.26]{Toda_JDG} show that
\begin{lem}\label{lem_overlineJ_J}
We have
$$\overline{J}(v_{\orb})=2 J(v_{\orb})$$
\end{lem}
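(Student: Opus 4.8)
The plan is to follow the strategy of Toda \cite[Lemmas 4.25, 4.26]{Toda_JDG}: one shows that both $\overline{J}(v_{\orb})$ and $J(v_{\orb})$ are computed by the same ``local at a fibre'' contribution, and that the factor $2$ is the difference between $\chi(\pp^1)=2$ and $\chi(\cc)=1$. Since $\overline{J}(v_{\orb})$ and $J(v_{\orb})$ have already been shown to be independent of the stability condition and of $\omega$, we may fix any convenient $\omega$; observe also that the stacky points of $\sS$ lie transverse to the curve directions, so they play no role in the argument and the bookkeeping is literally that of the smooth K3 surface case with $S$ replaced by $\sS$.

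First I would set up the support decomposition in the Hall algebra. Every $E\in\Coh_{\pi}(\overline{\XX})$ is supported on finitely many fibres of $\pi\colon\overline{\XX}\to\pp^1$, hence $E=\bigoplus_{p}E_p$ where $E_p$ is supported set-theoretically at $\sS\times\{p\}$ and there are no extensions between summands sitting over distinct points. If $E$ is $\omega$-Gieseker semistable then each $E_p$ is semistable with the same reduced Hilbert polynomial, so this decomposition is compatible with the defining sum of $\epsilon_{\omega,\overline{\XX}}(v_{\orb})$. Working in the completed Hall algebra of $\Coh_{\pi}(\overline{\XX})$ graded by reduced Hilbert polynomial one gets $\delta_{\omega,\overline{\XX}}=\prod_{p\in\pp^1}\bigl(1+\delta^{\mathrm{conn}}_{\overline{\XX},p}\bigr)$, where $\delta^{\mathrm{conn}}_{\overline{\XX},p}$ records the semistable sheaves supported in an infinitesimal neighbourhood of $\sS\times\{p\}$; taking logarithms annihilates all multi-point terms, so $\epsilon_{\omega,\overline{\XX}}(v_{\orb})$ is the class of a family over $\pp^1$ whose fibre over $p$ is the ``local'' element $\epsilon^{\mathrm{loc}}(v_{\orb})$ of semistable sheaves with connected support at a single point. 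The same discussion applies verbatim to $\Coh_{\pi}(\XX)$ with $\pp^1$ replaced by $\cc$, and the fibre is the same element $\epsilon^{\mathrm{loc}}(v_{\orb})$, since an infinitesimal neighbourhood of any point of $\pp^1$ (resp. of $0\in\cc$) inside $\overline{\XX}$ (resp. $\XX$) is $\sS\times\spec(\cc[t]/t^n)$.

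Next I would split off the base curve. Both families are Zariski-locally trivial over their bases: a Zariski neighbourhood of any point of $\pp^1$ or of $\cc$ is an affine line, the normal direction trivialises Zariski-locally, and for sheaves supported at a single point $p$ the Hilbert polynomial depends only on $\omega|_{\sS\times\{p\}}=\omega_{\sS}$, so stability is just $\omega_{\sS}$-Gieseker stability on $\sS$, independent of $p$. Since the virtual Poincar\'{e} polynomial $P_q$ is multiplicative for Zariski-locally trivial fibrations, one obtains
\[ P_q\bigl(\epsilon_{\omega,\overline{\XX}}(v_{\orb})\bigr)=P_q(\pp^1)\,P_q\bigl(\epsilon^{\mathrm{loc}}(v_{\orb})\bigr),\qquad P_q\bigl(\epsilon_{\omega,\XX}(v_{\orb})\bigr)=P_q(\cc)\,P_q\bigl(\epsilon^{\mathrm{loc}}(v_{\orb})\bigr). \]
Now $P_q(\pp^1)=q+1$ and $P_q(\cc)=q$, and by \cite[Theorem 6.2]{Joyce_2007} the limit $J(v_{\orb})=\lim_{q^{1/2}\to1}(q-1)P_q(\epsilon_{\omega,\XX}(v_{\orb}))$ exists; dividing by $q\to1$ shows $\lim_{q^{1/2}\to1}(q-1)P_q(\epsilon^{\mathrm{loc}}(v_{\orb}))=J(v_{\orb})$, whence
\[ \overline{J}(v_{\orb})=\lim_{q^{1/2}\to1}(q-1)(q+1)\,P_q\bigl(\epsilon^{\mathrm{loc}}(v_{\orb})\bigr)=2\,J(v_{\orb}). \]

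The step I expect to be the main obstacle is the rigorous support decomposition of the second paragraph: making precise, at the level of completed Hall algebras and stack functions, that the logarithm removes exactly the multi-fibre terms and that the resulting connected-support element really is the class of a Zariski-locally trivial family over $\pp^1$ with the prescribed fibre. This is exactly the content of Toda's \cite[Lemmas 4.25, 4.26]{Toda_JDG}, and the present proof amounts to importing that argument with $S$ replaced by $\sS$, the orbifold enhancement contributing nothing because it is concentrated away from the curve directions.
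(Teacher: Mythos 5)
Your proof is correct and follows essentially the same route as the paper, whose entire proof of this lemma is the single remark that ``the same arguments as in [Toda, Lemmas 4.25, 4.26]'' apply. Your support decomposition over the base curve, the logarithm killing multi-fibre terms, and the comparison $\lim_{q\to 1}(q-1)(q+1)P_q(\epsilon^{\mathrm{loc}})=2\lim_{q\to 1}(q-1)P_q(\epsilon^{\mathrm{loc}})$ is exactly an unpacking of Toda's argument, together with the correct observation that the stacky locus of $\sS$ is concentrated in the surface direction and so does not interfere.
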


Similar relations hold when we replace $\overline{J}(v_{\orb})$ and $J(v_{\orb})$ by
$\overline{J}(v_{Y})$ and $J(v_{Y})$ respectively.

\subsection{Automorphic property}\label{subsec_automorphic}

We prove some automorphic property of $J(v_{\orb})$.  Let us write down the derived equivalence:
$$\Phi: D(\Coh(\sS))\stackrel{\sim}{\longrightarrow} D(\Coh(Y))$$
explicitly from \cite{BKR}.  The equivalence $\Phi$ is given by:
$$\Phi(-)=Rp_{2*}p_1^*(-\otimes^{L}\sE)$$
for the kernel $\sE\in D(\sS\times Y)$ of $\Phi$, where
$p_1: \sS\times Y\to \sS$
and
$p_2: \sS\times Y\to Y$ are the projections.
Thus we have the diagram (\ref{eqn_diagram_sS_Y}) before,
such that
$$\Phi_{*}(-)=Ip_{2*}(Ip_{1}^*(-)\cdot \widetilde{\Ch}(\sE)\cdot \sqrt{\widetilde{\td}_{\sS\times Y}})$$
where
$$Ip_1: I\sS\times Y\to I\sS$$ and
$$Ip_2: I\sS\times Y\to Y$$
are projections on inertia stacks.
$\Phi_*$ induces an isomorphism on the weight two Hodge structures.  Note that in general $\widetilde{H}_{\CR}(\sS)$, taken as the Chen-Ruan cohomology, will have
$\qq$- or $\cc$-coefficients.   Since we take
$$v_{\orb}: K(\Coh(\sS))\to \widetilde{H}_{\CR}(\sS)$$
by $$E\mapsto \widetilde{\Ch}(E)\sqrt{\widetilde{\td}_{\sS}}$$
We call $\widetilde{H}_{\CR}(\sS)$ an integral structure since from Fourier-Mukai pairing,
$$\chi(E,F)=-\langle v_{\orb}(E), v_{\orb}(F)\rangle.$$
And $\Phi_{*}$ should be an isomorphism from $\widetilde{H}_{\CR}(\sS)$ to $\widetilde{H}(Y,\zz)$.

The derived equivalence $\Phi$ induces an isomorphism on the stability manifolds:
$$\Phi_{\sst}:  \Stab(D(\Coh(\sS)))\stackrel{\sim}{\longrightarrow}\Stab(D(\Coh(Y))).$$

\begin{prop}\label{prop_Jbar_sS_Y}
Let $\Stab^{\circ}(D(\Coh(\sS)))$ and $\Stab^{\circ}(D(\Coh(Y)))$ be the connected components containing the stability conditions $\sigma_{t\omega}$
constructed before.  Then $\Phi_{\sst}$ takes $\Stab^{\circ}(D(\Coh(\sS)))$  to $\Stab^{\circ}(D(\Coh(Y)))$.
For any $v_{\orb}\in \Gamma_0^G$, we have
$$\overline{J}_{\sS}(v_{\orb})=\overline{J}_{Y}(\Phi_{*}v_{\orb})=\overline{J}_Y(v_{Y}).$$
\end{prop}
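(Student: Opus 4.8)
The plan is to deduce the proposition from two ingredients: the functoriality of the whole motivic Hall-algebra construction under the derived equivalence $\Phi$, and the independence of the Joyce invariants on the stability condition within a connected component of the stability manifold (Theorem \ref{thm_Joyce_sigma} on the $\sS$-side, and \cite[Theorem 4.24]{Toda_JDG} on the $Y$-side). First I would set up the functoriality. The equivalence $\Phi$, together with the induced equivalence $D(\Coh_\pi(\overline{\XX}))\xrightarrow{\sim}D(\Coh_\pi(\overline{Z}))$, is exact, so it carries the heart $\sB_\omega$ of the bounded $t$-structure on $\sD_0^{\sS}$ to the heart $\sB':=\Phi(\sB_\omega)$ of a bounded $t$-structure on $\sD_0^{Y}$, and transports the central charge $\sZ_{t\omega}$ to a central charge on $\sB'$; by construction the resulting stability condition is $\Phi_{\sst}(\sigma_{t\omega})$. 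Being an equivalence, $\Phi$ induces an isomorphism of moduli stacks $\sM(\sB_\omega)\xrightarrow{\sim}\sM(\sB')$ and of the stacks of short exact sequences, hence an isomorphism of motivic Hall algebras $\hH(\sB_\omega)\xrightarrow{\sim}\hH(\sB')$ that commutes with the Poincar\'e-polynomial homomorphism $P_q$ (which is intrinsic to the stack). Since $\widetilde{\cl}(\Phi(E))=\Phi_{*}\widetilde{\cl}(E)$ by the commutative diagram (\ref{eqn_diagram_sS_Y}), the class $\delta_{\sigma_{t\omega}}(v_{\orb})$ is sent to $\delta_{\Phi_{\sst}(\sigma_{t\omega})}(\Phi_{*}v_{\orb})$; taking the logarithms (\ref{eqn_epsilon_XX_sigma}) and applying $P_q$ gives
$$\overline{J}^{\sigma_{t\omega}}_{\sS}(v_{\orb})=\overline{J}^{\Phi_{\sst}(\sigma_{t\omega})}_{Y}(\Phi_{*}v_{\orb}).$$

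Second, I would prove that $\Phi_{\sst}$ carries the distinguished connected component containing $\sigma_{t\omega}$ on the $\sS$-side to the one containing the Toda-type stability conditions of \cite{Toda_JDG} on $\sD_0^{Y}$. As $\Phi_{\sst}$ is an isomorphism of complex manifolds it maps connected components to connected components, so it is enough to connect $\Phi_{\sst}(\sigma_{t\omega})$ to a stability condition of the form constructed in the previous subsection by a continuous path in the stability manifold on the $Y$-side. For $t\gg0$ the $\sigma_{t\omega}$-semistable objects of a fixed class are controlled by Gieseker stability (in the spirit of Theorem \ref{thm_G_Nu}), and $\Phi$ is compatible with the fibration $\pi$ (as recorded after (\ref{eqn_sD_sS_Y})) and with the Hodge isometry $\Phi_{*}$ of Mukai lattices coming from (\ref{eqn_diagram_sS_Y}) and \cite{BKR}. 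Since the McKay correspondence of \cite{BKR} takes the ample cone of $\sS$ into a $B$-field translate of the ample cone of $Y$ --- the same input used in \cite{Toda_JDG} for honest K3 surfaces --- the image $\Phi_{\sst}(\sigma_{t\omega})$ lies in the large-volume region of $\Stab(\sD_0^{Y})$, hence in the distinguished component; alternatively one can intertwine directly the period maps describing the two distinguished components via $\Phi_{*}$. Any auto-equivalences of $D(\Coh(Y))$ (spherical or line-bundle twists) needed to land in a standard chamber act compatibly on $\Stab$ and on the Mukai lattice, so they are already absorbed into $\Phi_{*}$.

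Combining the two steps: by the second step, $\Phi_{\sst}(\sigma_{t\omega})$ lies in the distinguished component on the $Y$-side, so \cite[Theorem 4.24]{Toda_JDG} gives $\overline{J}^{\Phi_{\sst}(\sigma_{t\omega})}_{Y}(\Phi_{*}v_{\orb})=\overline{J}_{Y}(\Phi_{*}v_{\orb})=\overline{J}_Y(v_{Y})$, while Theorem \ref{thm_Joyce_sigma} identifies the left-hand side of the displayed identity in the first step with $\overline{J}_{\sS}(v_{\orb})$; hence $\overline{J}_{\sS}(v_{\orb})=\overline{J}_Y(v_{Y})$, which is the assertion. I expect the main obstacle to be the second step --- matching up the distinguished connected components --- since it requires controlling how the Hodge isometry $\Phi_{*}$ moves the geometric (large-volume) chamber and, if necessary, keeping track of the auto-equivalences of $D(\Coh(Y))$ that realize $\Phi_{\sst}(\sigma_{t\omega})$ as a standard stability condition. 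The functoriality in the first step and the final assembly are purely formal once this is in place.
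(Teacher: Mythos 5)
Your proposal is correct and follows essentially the same route as the paper: transport the Hall-algebra/Joyce-invariant data through the equivalence $\widetilde{\Phi}$ induced on $\sD_0^{\sS}\to\sD_0^{Y}$ (with the class map intertwined by $\Phi_*$ via diagram (\ref{eqn_diagram_sS_Y})), match the distinguished components of the stability manifolds under $\widetilde{\Phi}_{\sst}$, and conclude by the chain $\overline{J}_{Y}(\Phi_*v_{\orb})=\overline{J}_{Y}^{\widetilde{\Phi}_{\sst}(\sigma_{\sS})}(\Phi_*v_{\orb})=\overline{J}_{\sS}^{\sigma_{\sS}}(v_{\orb})=\overline{J}_{\sS}(v_{\orb})$ using independence of the invariants on the stability condition. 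The only difference is that you spell out the component-matching step in more detail, whereas the paper asserts it by analogy with \cite[Proposition 4.29]{Toda_JDG}.
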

\begin{proof}
The proof is similar to \cite[Proposition 4.29]{Toda_JDG}.
Consider
$\overline{\XX}=\sS\times \pp^1$ and $\overline{Z}=Y\times \pp^1$. Then the equivalence $\Phi: D(\Coh(\sS))\stackrel{\sim}{\longrightarrow} D(\Coh(Y))$ induces an equivalence
$$\widetilde{\Phi}: D^b(\Coh(\overline{\XX}))\stackrel{\sim}{\longrightarrow} D^b(\Coh(\overline{Z}))$$
such that the kernel is given by:
$$\sE\boxtimes \sO_{\Delta_{\pp^1}}\in D^b(\Coh(\sS\times Y\times \pp^1\times\pp^1)).$$
Thus $\widetilde{\Phi}$ restricts to give an equivalence:
between $\sD_0^{\sS}$ and $\sD_0^Y$.  Therefore the diagram  (\ref{eqn_diagram_sS_Y})  gives a diagram:
\begin{equation}\label{eqn_diagram_sDD}
\xymatrix{
\sD_0^{\sS}\ar[r]^{\widetilde{\Phi}} \ar[d]_{\widetilde{\cl}+0\sqrt{\widetilde{\td}_{\sS}}}& \sD_0^Y\ar[d]^{\cl_0\sqrt{\td_Y}}\\
\Gamma_0^G\ar[r]^{\Phi_*}& \Gamma_0^Y
}
\end{equation}
Also $\widetilde{\Phi}$ induces the isomorphism
$$\widetilde{\Phi}_{\sst}: \Stab^{\circ}_{\Gamma_0^G}(\sD_0^{\sS})\stackrel{\sim}{\longrightarrow} \Stab^{\circ}_{\Gamma_0^Y}(\sD_0^{Y}).$$
Take $\sigma_{\sS}\in \Stab^{\circ}_{\Gamma_0^G}(\sD_0^{\sS})$, such that $\widetilde{\Phi}_{\sst}(\sigma_{\sS})=\sigma_Y$.
For any $v_{\orb}\in \Gamma_0^G$, from diagram (\ref{eqn_diagram_sDD}), we calculate
\begin{align*}
\overline{J}_{Y}(\Phi_*v_{\orb})&=\overline{J}_{Y}^{\sigma_Y}(\Phi_*v_{\orb})\\
&=\overline{J}_{Y}^{\widetilde{\Phi}_{\sst}(\sigma_{\sS})}(\Phi_*v_{\orb})\\
&=\overline{J}_{\sS}^{\sigma_{\sS}}(v_{\orb})\\
&=\overline{J}_{\sS}(v_{\orb}).
\end{align*}
\end{proof}

From Lemma \ref{lem_overlineJ_J},

\begin{cor}\label{cor_J_sS_Y}
We have:
$$J_{\sS}(v_{\orb})=J_Y(v_Y). $$
\\
\end{cor}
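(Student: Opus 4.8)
\textbf{Proof proposal for Corollary \ref{cor_J_sS_Y}.}
The plan is to deduce the identity $J_{\sS}(v_{\orb})=J_Y(v_Y)$ directly by combining the automorphic property established in Proposition \ref{prop_Jbar_sS_Y} with the relation between the invariants on the local K3 surface and on its compactification recorded in Lemma \ref{lem_overlineJ_J}. No new geometric input is needed; the argument is a short diagram chase through the normalizations.

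First I would invoke Lemma \ref{lem_overlineJ_J}, which gives $\overline{J}(v_{\orb})=2J(v_{\orb})$ for the orbifold local K3 surface $\XX=\sS\times\cc$ and its compactification $\overline{\XX}=\sS\times\pp^1$. As remarked immediately after that lemma, the analogous relation $\overline{J}(v_Y)=2J(v_Y)$ holds for the local K3 surface $Z=Y\times\cc$ and $\overline{Z}=Y\times\pp^1$, with the same factor of $2$, since the proof is formally identical (it comes from the semiorthogonal decomposition $\sD^{\sS}=\langle\pi^*\Pic(\pp^1),\Coh_\pi(\overline{\XX})\rangle_{\tr}$, respectively $\sD^Y=\langle\pi^*\Pic(\pp^1),\Coh_\pi(\overline{Z})\rangle_{\tr}$, and the $\pp^1$ carries two points). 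Next I would apply Proposition \ref{prop_Jbar_sS_Y}: for $v_{\orb}\in\Gamma_0^G$ and $v_Y=\Phi_*(v_{\orb})\in\Gamma_0^Y$, the derived equivalence $\Phi\colon D(\Coh(\sS))\xrightarrow{\sim}D(\Coh(Y))$ (and its lift $\widetilde\Phi$ to the compactified local surfaces, restricting to an equivalence $\sD_0^{\sS}\xrightarrow{\sim}\sD_0^{Y}$ compatible with $\Phi_*$ on numerical classes and with the distinguished connected components of the stability manifolds) yields $\overline{J}_{\sS}(v_{\orb})=\overline{J}_Y(v_Y)$.

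Putting these together, $2J_{\sS}(v_{\orb})=\overline{J}_{\sS}(v_{\orb})=\overline{J}_Y(v_Y)=2J_Y(v_Y)$, and dividing by $2$ gives $J_{\sS}(v_{\orb})=J_Y(v_Y)$, as claimed. The only point requiring a little care — and the closest thing to an obstacle — is checking that the correspondence used in Proposition \ref{prop_Jbar_sS_Y} and the one implicit in the statement of the Corollary agree: namely that $v_Y=\Phi_*(v_{\orb})$ under the isomorphism $\Phi_*\colon H^*_{\CR}(\sS)\to H^*(Y)$ of Proposition \ref{prop_FM_sS_Y}, which is exactly the $v_Y$ appearing in the Corollary, and that the Gieseker/Bridgeland identifications on both sides (Theorem \ref{thm_Joyce_sigma}, Theorem \ref{thm_G_Nu}, and their analogues for $\Coh_\pi$ via \cite[Theorem 4.24]{Toda_JDG}) are compatible with $\widetilde\Phi_{\sst}$ — but all of this is precisely what Proposition \ref{prop_Jbar_sS_Y} and the preceding lemmas have set up, so the Corollary follows formally. $\Box$
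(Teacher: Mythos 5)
Your argument is exactly the one the paper intends: the corollary is stated immediately after Lemma \ref{lem_overlineJ_J} precisely so that it follows by combining $\overline{J}_{\sS}(v_{\orb})=\overline{J}_Y(v_Y)$ from Proposition \ref{prop_Jbar_sS_Y} with the factor-of-$2$ relations $\overline{J}=2J$ on both sides. Your write-up simply makes explicit the two-line computation the paper leaves implicit, so it is correct and takes the same route.
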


\subsection{Proof of the multiple cover formula Theorem \ref{thm_multiple_cover}}\label{subsec_proof_multiple}

In this section we prove the multiple cover formula for the invariants $J(v_{\orb})$ for $v_{\orb}\in \Gamma_0^G$.

First for the Mukai vector $v_Y\in \Gamma_0^Y$, since $Y$ is a smooth K3 surface, Toda's multiple cover formula, which was proved in \cite{MT}, says
$$J(v_Y)=\sum_{k| v_Y, k\ge 1}\frac{1}{k^2}\chi(\Hilb^{\langle v_Y, v_Y\rangle/2+1}(Y)).$$
Since $J(v_Y)=J(v_{\orb})$ for $\Phi_*(v_{\orb})=v_Y$, we have
$$J(v_{\orb})=\sum_{k| v_Y, k\ge 1}\frac{1}{k^2}\chi(\Hilb^{\langle v_Y, v_Y\rangle/2+1}(Y)).$$
From \S \ref{subsec_Hilbert_scheme} and Lemma \ref{lem_vorb_vY},  we have
$$\Hilb^{n, \mathfrak{m}}(\sS)\dasharrow \Hilb^{\langle \frac{v_Y}{k}, \frac{v_Y}{k}\rangle/2+1}(Y)$$
is birational equivalent, where
$$n:=\langle  \frac{v_Y}{k}, \frac{v_Y}{k}\rangle/2+1-\frac{1}{2}D^2_{\mathfrak{m}}=\langle  \frac{v_{\orb}}{k}, \frac{v_{\orb}}{k}\rangle/2+1-\frac{1}{2}D^2_{\mathfrak{m}}$$
and
$\mathfrak{m}$ is determined by the Mukai vector $\frac{v_{\orb}}{k}=(r, (\beta, \mathfrak{m}), n)\in \Gamma_0^G$.
Therefore we have
$$J(v_{\orb})=\sum_{k|v_{\orb}, k\ge 1}
\frac{1}{k^2}\chi(\Hilb^{n, \mathfrak{m}}(\sS)).
$$




\end{document}